\newtheorem{theorem}{Theorem}
\newtheorem{proposition}[theorem]{Proposition}
\newtheorem{definition}[theorem]{Definition}
\newtheorem{example}[theorem]{Example}
\newtheorem{remark}[theorem]{Remark}
\newtheorem{observation}[theorem]{Observation}
\newtheorem{lemma}[theorem]{Lemma}
\newtheorem{corollary}[theorem]{Corollary}
\DeclareMathOperator{\best}{\textit{best}}
\title{Quantum computing and the stable set problem}
\author{
	{Aljaž Krpan} \thanks{Rudolfovo, Science and Technology Centre Novo mesto, Slovenia} \thanks{Faculty of mathematics and physics, University of Ljubljana, Slovenia, {\tt krpan.aljaz@gmail.com}}
	\and {Janez Povh}\thanks{Rudolfovo, Science and Technology Centre Novo mesto, Slovenia, {\tt janez.povh@rudolfovo.eu}} \thanks{Faculty of mechanical engineering, University of Ljubljana, Slovenia} \and  {Dunja Pucher}\thanks{Department of Mathematics,
		University of Klagenfurt, Austria,
		{\tt dunja.pucher@aau.at} }}
\date{}
\begin{document}
	
	\maketitle
	
	\begin{abstract}
		\noindent 
		Given an undirected graph, the stable set problem asks to determine the cardinality of the largest subset of pairwise non-adjacent vertices. This value is called the stability number of the graph, and its computation is an NP-hard problem. In this paper, we solve the stable set problem using the D-Wave quantum annealer. By formulating the problem as a quadratic unconstrained binary optimization problem with the penalty method, we show its optimal value equals the graph's stability number for specific penalty values. However, D-Wave's quantum annealer is a heuristic, so the solutions may be far from the optimum and may not represent stable sets. To address these, we introduce a post-processing procedure that identifies samples that could lead to improved solutions. Additionally, we propose a partitioning method to handle larger instances that cannot be embedded on D-Wave's quantum processing unit. Finally, we investigate how different penalty parameter values affect the solutions' quality. Extensive computational results show that the post-processing procedure significantly improves the solution quality, while the partitioning method successfully extends our approach to medium-size instances.
		\\[1mm]
		\noindent\textbf{Keywords:} The stable set problem, Quantum annealing, Graph partitioning, D-Wave\\[1mm]
		\noindent\textbf{Math. Subj. Class. (2020): Primary 90C27, Secondary 81P68}	
	\end{abstract}

	\section{Introduction}\label{intro}
	
	\subsection{Motivation} The stable set problem is a fundamental combinatorial optimization problem. Given an undirected simple graph $G$, the stable set problem asks for the largest subset of vertices that are pairwise non-adjacent, known as the stability number of the graph. It is well-known that stable sets in $G$ correspond to cliques in the complement graph $\overline{G}$, making the stable set problem in $G$ equivalent to the maximum clique problem in $\overline{G}$. The maximum clique problem (and thus the stable set problem), is an NP-hard problem. The decision variant of the maximum clique problem is included in Karp's 1972 list of NP-complete problems~\cite{Karp72}. Hence, unless P~=~NP, no polynomial time algorithm can exactly solve the stable set problem in general. However, there are numerous real-life applications of this problem, for instance, in bioinformatics and chemoinformatics~\cite{Dognin}, coding theory~\cite{Etzion}, and scheduling~\cite{Weide}. Therefore, efficient algorithms are needed to solve this problem. 

    The most desired algorithms to solve it are the exact ones. They guarantee to find the optimal solution to the problem. Since the stable set problem belongs to the family of discrete optimization problems, the exact algorithms usually employ a variant of complete enumeration, like branch-and-bound or branch-and-cut algorithms. There is a long list of exact solvers, which can be grouped as academic solvers, developed and maintained by some research group and available by some variant of open source license, like BiqBin  \cite{Gusmeroli,hrga2021biqbin}, SCIP \cite{BolusaniEtal2024OO}, and commercial solvers, developed and maintained by some company and available under different commercial licenses, like Gurobi \cite{gurobi} and CPLEX \cite{cplex}. Other solvers to solve the stable set problem can be found also on  GAMS webpage \url{https://www.gams.com/latest/docs/S_MAIN.html}. 
    Nevertheless, exact solvers have significant limitations: they are always computationally very expensive, in both theory and practice and can solve only small and medium-sized instances with the underlying graph that has up to approximately $1500$ vertices; see, for instance,~\cite{Depolli} and~\cite{Segundo}.
    
    Alternatives to exact solvers are the algorithms that find good, but not necessarily optimal, solutions, the so-called heuristics. They can process large problems within acceptable computational time and usually find good solutions that are acceptable for real-life applications. However, they have limitations, and the distance from the optimal solution is often unknown. In a review of classical algorithms for the stable set problem from 2015~\cite{Wu}, $21$ different heuristics were compared. It turned out that the general swap-based multiple neighborhood tabu search approach from~\cite{Yan} yielded the best results. A recent review on novel approaches~\cite{marino} points out the algorithm from~\cite{Lamm} as extremely effective. The respective algorithm combines local search with evolutionary and extended kernelization techniques.

    \subsection{Quantum annealing}\label{intro_QA}
    In recent years, a new heuristic approach for solving combinatorial optimization problems has emerged based on quantum annealing. Quantum annealers leverage quantum mechanics principles to tackle optimization problems. They operate by exploiting one of the quantum phenomena, quantum tunneling, to find the most efficient solution to complex computational challenges. A review of mathematical and theoretical foundations of quantum annealing can be found in~\cite{Morita}.

    State-of-the-art quantum annealing machines integrate advanced technologies to achieve better qubit coherence times, error rates, and connectivity between qubits. These machines rely on superconducting circuits cooled to ultra-low temperatures, harnessing quantum effects to explore vast solution spaces efficiently. 

    D-Wave Systems has been at the forefront of quantum annealing technology, pioneering the development of quantum annealers. One of the distinguishing features of their quantum annealers is the qubit architecture; see~\cite{boothby:2020}. While traditional qubits rely on a coherent superposition of states, D-Wave utilizes an approach where qubits represent a probabilistic combination of binary states. This allows for an adaptive search through solution spaces, exploiting quantum tunneling to navigate complex landscapes efficiently. Further details on the inner workings of D-Wave's quantum annealer can be found in their documentation~\cite{what-is-quantum-annealing-d-wave-docs}.

    However, challenges persist in achieving fault-tolerant quantum computation. Noise and maintaining quantum coherence remain significant hurdles. D-Wave's machines are subject to limitations due to factors like thermal fluctuations and imperfections in the qubit layout. As a result, achieving quantum advantage—where quantum computers outperform classical computers for specific tasks—remains an ongoing goal rather than a realized standard in practical applications.

    Despite these challenges, D-Wave's quantum annealers have demonstrated promising results in tackling optimization problems. Indeed, several solvers for mathematical optimization problems are an integral part of D-Wave systems. The most basic one, the quantum processing unit (QPU), is declared a purely quantum solver. It is capable of solving optimization problems of small size. 

    Besides QPU, D-Wave offers a hybrid solver that combines classical and quantum computing to solve optimization problems. Details about this solver are not disclosed but based on information provided in~\cite{mcgeoch2020d}, the hybrid solver combines classical and quantum computing to solve problems beyond the reach of QPU. This solver seems to use a classical computer to preprocess the problem and generate a set of subproblems that can be solved using QPU. When the subproblems are solved, the results are returned to the classical computer, which combines them to produce a solution to the original problem.

    Although there is increased interest in using D-Wave systems for combinatorial optimization problems and several authors have already published numerical results obtained by D-Wave systems, see Section~\ref{Related_work}, a thorough analysis of the results for the stable set problem, especially how close are the results computed by D-Wave systems to the global optima,  as well as the development of strategies to enhance these results, are still missing.

    \subsection{Our contribution} 

    In this paper, we extend our research presented in~\cite{KRPAN_2023, povh} and address the aforementioned limitations. We consider the stable set problem and investigate methods to obtain high-quality solutions with algorithms that at least partially exploit the D-Wave systems.

    The main contributions of this paper are:
    \begin{enumerate}
        \item We formulate the stable set problem as a quadratic unconstrained binary optimization (QUBO) problem using a penalty approach, where the quadratic constraints, weighted by the penalty parameter $\beta$, are added to the cost function. We show that if we use an exact QUBO solver, then for $\beta\ge \frac{1}{2}$, the optimum value of QUBO formulation equals the stability number of the graph.
        \item The D-Wave QPU solver could only compute local optimum solutions for instances with less than $150$ vertices. Our results show that very often, these solutions are quite far away from the global solutions in terms of the objective value and may not even be stable sets. To address these, we developed a post-processing procedure that enables us to detect samples that could lead to improved solutions and to extract solutions that are stable sets. Also, we provide theoretical guarantees about the quality of the extracted solutions. The procedure is outlined in Algorithm~\ref{alg}. Table \ref{table_1} contains detailed numerical results of the solution provided by the QPU solver as part of Algorithm~\ref{alg}. 
        \item For instances with more than $150$ vertices, we used the D-Wave hybrid solver and the classical simulated annealing solver from the D-Wave’s library for interacting with QPU. Both solvers again returned solutions that were often not stable sets, so we applied the post-processing procedure from Algorithm~\ref{alg}, and we reported detailed numerical results in Tables~\ref{table_2} and~\ref{table_3}. A comparison of the results from these tables reveals that the solutions obtained using the hybrid solver and simulated annealing are very similar. This may partially explain how the hybrid solver is running. 
        \item For certain instances with more than $150$ vertices, we also developed an alternative to the hybrid solver, which we call the simple partitioning procedure. It enhances the so-called CH-partitioning from \cite{GHGG, Djidjev2016}, which divides the original problem into many smaller subproblems, solves them with the QPU, and finally takes the largest stable set across the subproblems as the final solution. In order to improve solutions returned by the QPU, we combine the simple CH-partitioning with the post-processing procedure, as outlined in Algorithm~\ref{alg_part}. Numerical analysis is presented in Table~\ref{table_4}.
        \item  All solvers used in this paper are only approximate solvers. Therefore, the theoretical guarantee mentioned in the first item does not apply, so we performed extensive numerical tests using different values of the penalty term $\beta$ in the range from $\frac{1}{10}$ to $100$ to evaluate which values of $\beta$ are most relevant.
        We observed that the value $\beta =\frac{1}{2}$ gives the overall best solutions, which is consistent with theoretical guarantees. Nevertheless, in combination with the post-processing procedure, $\beta$ smaller than $\frac{1}{2}$ also often gives good, sometimes even optimal, solutions, but this holds only for smaller graphs and does not scale with the graph size. 
    \end{enumerate} 
  
    \subsection{Outline and terminology} 

    The rest of this paper is organized as follows. In Section~\ref{Related_work}, we give an overview of related work that aims to solve the stable set problem with quantum annealing. A QUBO formulation related to the stable set problem is derived in Section~\ref{section_QUBO_formulation}. In Section~\ref{Section_post_processing}, we discuss some aspects and shortcomings of the given QUBO formulation and present our post-processing procedure. For larger instances, we propose to use the CH-partitioning method and develop its modified version in Section~\ref{Graph_partitioning}. In Section~\ref{computational_results}, we perform extensive computational study and present results for newly proposed approaches. Finally, we conclude with a short discussion of our results in Section~\ref{Conclusion}.
    
    Throughout this paper, $G = (V(G), E(G))$ denotes a simple undirected graph with $\vert V(G) \vert = n$ vertices and $\vert E(G) \vert = m$ edges. Without loss of generality, we assume that the set of vertices $V(G)$ is $V(G) = \{v_1, \ldots, v_n\}$. If a graph $G$ is clear from the context, we write shortly $G = (V, E)$. For a graph $G = (V, E)$, a stable set $S \subseteq V$ is a subset of pairwise non-adjacent vertices. A clique $C \subseteq V$ is a subset of pairwise adjacent vertices. The cardinality of the maximum stable set in $G$ is denoted by $\alpha(G)$, and the cardinality of the maximum clique in $G$ is denoted by $\omega(G)$. The stable set problem asks to determine $\alpha(G)$, while the maximum clique problem asks to determine $\omega(G)$. 
    
    A non-empty graph $G$ is called connected if any of its two vertices are linked by a path in $G$. A maximal connected subgraph of $G$ is called a component of $G$. The complement of a graph $G = (V, E)$ is the graph $\overline{G} = (V, \overline{E})$, where $\overline{E}$ is the set of non-edges in $G$, that is $\overline{E} = \{ v_i,v_j \in V\; \vert \; v_i \neq v_j \wedge \{v_i,v_j\} \notin E\}$. Hence, from the definitions it follows $\alpha(G) = \omega(\overline{G})$ and $\alpha(\overline{G}) = \omega(G)$. The relationship between the stable set and the maximum clique problem is visualized in Figure~\ref{figure_1}.
    
    \begin{figure}[htbp]
    \centering
    \begin{minipage}{0.45\textwidth}
    \centering
    \begin{tikzpicture}
    \tikzset{Bullet/.style={fill=black,draw,color=#1,circle,minimum size=3pt,scale=1}}
    \node[Bullet=black,label=left :{$v_5$}] (5) at (0,1) {};
    \node[Bullet=black,label=above :{$v_1$}] (1) at (2, 3) {};
    \node[Bullet=lightgray,label=right :{$v_2$}] (2) at (4, 1) {};
    \node[Bullet=black,label=right :{$v_3$}] (3) at (3, -1) {};
    \node[Bullet=lightgray,label=left :{$v_4$}] (4) at (1, -1) {};
    \draw (1) -- (4);
    \draw (2) -- (3);
    \draw (2) -- (4);
    \draw (3) -- (4);
    \draw (2) -- (5);
    \end{tikzpicture}
    \end{minipage}%
    \hfill
    \begin{minipage}{0.45\textwidth}
    \centering
    \begin{tikzpicture}
    \tikzset{Bullet/.style={fill=black,draw,color=#1,circle,minimum size=3pt,scale=1}}
    \node[Bullet=black,label=left :{$v_5$}] (5) at (0,1) {};
    \node[Bullet=black,label=above :{$v_1$}] (1) at (2, 3) {};
    \node[Bullet=lightgray,label=right :{$v_2$}] (2) at (4, 1) {};
    \node[Bullet=black,label=right :{$v_3$}] (3) at (3, -1) {};
    \node[Bullet=lightgray,label=left :{$v_4$}] (4) at (1, -1) {};
    \draw (1) -- (2);
    \draw (1) -- (3);
     \draw (1) -- (5);
    \draw (3) -- (5);
    \draw (4) -- (5);
    \end{tikzpicture}
    \end{minipage}
    \caption{The left figure shows a graph $G$ on five vertices. In this graph, the following subsets of vertices are stable sets: $\{v_1, v_2\}$, $\{v_1, v_3\}$, $\{v_1, v_5\}$, $\{v_1, v_3, v_5\}$, $\{v_3, v_5\}$, $\{v_4, v_5\}$. The right figure shows the complement graph $\overline{G}$, where these same subsets form cliques. In particular, the maximum stable set $\{v_1, v_3, v_5\}$ in $G$ is equivalent to the maximum clique in $\overline{G}$, and therefore, $\alpha(G) = \omega(\overline{G}) = 3$.} 
    \label{figure_1}
    \end{figure}   

    For a vertex $v_i \in V$, the set of vertices adjacent to $v_i$ is called its neighborhood. The closed neighborhood of vertex $v_i$ includes the vertex $v_i$ and its neighborhood. We denote this closed neighborhood set by $N(v_i)$. For $X \subseteq V$, we denote a subgraph of $G$ induced by $X$ by $G[X]$. The closed neighborhood of vertices in $X \subseteq V$ is $N(X) = \cup_{v_i \in X} N(v_i)$. The vector of all-ones of length $n$ is denoted by $e_n$, and we write $I_n$ for the identity matrix of order $n$. If $n$ is clear from the context, we write shortly $e$ and $I$.

	\section{Related Work}\label{Related_work}
	
	This section summarizes previous research on solving the stable set and maximum clique problems using quantum annealers, explicitly focusing on results from D-Wave systems. First, we discuss the architecture, methods, instances considered, and results obtained. Then, we highlight one aspect of the problem formulation related to the penalization parameter $\beta$ in prior studies and briefly present our contribution within this framework.

    One of the first numerical results on solving the stable set problem with  D-Wave quantum annealers is shown in~\cite{Parekh}. The results were obtained on D-Wave Two AQO with $512$ qubits in Chimera graph architecture. The experiments were compared to Selby's exact and heuristic algorithms for Chimera graphs, revealing that Selby's exact method performed better than the heuristic and quantum annealing approaches, since it found optimal solutions in a shorter running time.
    
    Further numerical experiments were conducted in~\cite{GHGG}, where the authors compared quantum annealing implementation to several classical algorithms, such as simulated annealing, Gurobi, and some third-party clique-finding heuristics. For their tests, they used D-Wave 2X with roughly $1000$ qubits in Chimera graph architecture. The experiments were performed on random graphs with $45$ vertices and edge probabilities ranging from $0.3$ to $0.9$. D-Wave returned solutions of comparable quality to classical methods, but classical solvers were generally faster for small instances. They also tested subgraphs of D-Wave chimera graphs, where D-Wave returned the best solutions for large instances and showed a substantial computing speed-up. For instances that did not fit D-Wave's architecture, they proposed decomposing the problem into subproblems and combining solutions, which was effective for sparse graphs.

    Similar experiments, but on D-Wave 2000Q, were done in~\cite{YPB}. The results obtained for random graphs with up to $60$ vertices and edge probabilities ranging from $0.05$ to $0.425$ were compared to classical algorithms such as simulated annealing and the graphical networks package NetworkX. It was shown that the D-Wave's QPU can find optimal or near-optimal solutions for all considered instances. However, it was outperformed by simulated annealing, which always found the optimal solution and was always the fastest for problems with $30$ or more vertices. Additionally, the authors assessed the performance of D-Wave 2000Q as a function of edge probability. After running $100000$ sample solutions per instance, they calculated the mean probability of obtaining the optimal solution. For instances with density $0.20$, the lowest success rate was noted, suggesting that these instances represent the most difficult instances for the QPU in terms of computing the optimum solution. 

    The decomposition method for instances that do not fit D-Wave's architecture introduced in~\cite{GHGG} was further considered in~\cite{Pelofske:2019}. A pruning strategy for subproblems arising from the decomposition was introduced to reduce the computational complexity. Also, some reduction strategies were considered. The experiments for selected DIMACS instances were performed on D-Wave 2000Q, and it was shown that combining the decomposition method with proposed strategies is faster at computing maximum cliques than the method introduced in~\cite{GHGG}, with a considerable speed-up for very sparse as well as very dense graphs. Additionally, in~\cite{Pelofske:2022xho}, the algorithm from~\cite{Pelofske:2019} was combined with parallel quantum annealing and applied to graphs with up to $120$ vertices and $6935$ edges on the D-Wave Advantage System 4.1. in Pegasus graph architecture. It was found that this combined approach can compute maximum cliques in high-density graphs significantly faster than classical solvers.

    Finally, a computational study of three generations of D-Wave quantum annealers was done recently in~\cite{pelofske:2023}. The experiments were conducted on random graphs with $52$ vertices and edge probabilities ranging from $0.05$ to $0.95$. The computations were performed on systems D-Wave 2000Q, D-Wave Advantage System 4.1, D-Wave Advantage System 6.1 as well as on the prototype of D-Wave Advantage2 System 1.1. in Zephyr graph architecture. The best results for the maximum clique problem were obtained on the newest prototype of D-Wave Advantage 2, showing a progression of the quantum annealing technology over a relatively short time span of development.
    
    To solve an optimization problem with D-Wave's QPU, one should formulate it as an Ising model or a QUBO problem. In~\cite{Lucas}, respective formulations for hard problems were introduced. Among others, a QUBO formulation for the stable set was given. The given formulation contains two terms, one for the vertices and one for the edges. While the contribution of a vertex in the solution is rewarded, every edge in the solution is penalized. It is suggested in~\cite{Lucas} to set penalization for edge to be strictly greater than a reward for a vertex. 

    In the related work discussed in this section, the authors focused on penalizing edges without rewarding vertices. Specifically, when adapting their formulations to the QUBO formulation given in this work---where vertices are not rewarded, but edges are penalized using a penalty parameter $\beta$---we observe that a value of $\beta = 1$ was used in all these studies.

    In contrast, we demonstrate that, with an exact QUBO solver, the optimal value of the given QUBO formulation corresponds to the stability number of the graph when the penalty term $\beta$ is set to at least $\frac{1}{2}$. However, given the approximate nature of quantum annealing, the solutions obtained may not always be optimal or they may not be stable sets. To address these issues, we introduce methods for analyzing and post-processing computed solutions to improve them. Our computational analysis suggests that setting the penalization term at $\beta = \frac{1}{2}$ yields better outcomes than the choice of $\beta = 1$ used in previous research.

	\section{QUBO formulation}\label{section_QUBO_formulation}
	
    In this section, we use the formulation of the stable set problem as an integer linear programming problem in binary decision variables with many linear constraints, which is obviously not an instance of QUBO. We first find an equivalent formulation where all linear constraints are replaced by a single quadratic constraint, and in the next step, we apply a penalty approach and move the new quadratic constraint, multiplied by a penalty parameter $\beta$, into the objective function, resulting in a QUBO problem.

    The optimum solution of this QUBO problem is not necessarily the same as the optimum solution of the original stable set problem, which means that the optimum solution of QUBO may not be a stable set, and its size may not be equal to the size of the maximum stable set in the underlying graph.

    We investigate which values of $\beta$ ensure that the optimum solution of QUBO is also the optimum solution of the original problem and provide theoretical guarantees in Section~\ref{subsection_QUBO_formulation}. We also illustrate what can happen if they are not fulfilled. However, choosing the appropriate value for the penalty parameter is a challenge when working with quantum annealers due to the so-called scaling problem, which we briefly discuss in Section~\ref{scaling_problem}.
    
    \subsection{Formulation of the stable set problem}\label{subsection_QUBO_formulation}
    
    Let $G = (V, E)$ be a graph and $S\subseteq V$ a stable set. We can represent $S$ by a binary vector $x$ of length $\vert V \vert$ with $x_i=1$ if and only if $v_i\in S$. Since the vertices in $S$ cannot be adjacent, for every edge at most one end vertex can be in $S$. A traditional way to formulate the stable set problem is the following integer linear programming formulation:
    \begin{equation}\label{stable_set_quadratic}
    \begin{aligned}
    \alpha(G)~=~\max \{e^Tx \mid ~x \in \{0, 1\}^n \wedge ~\forall \{v_i, v_j\} \in E,\; x_i+x_j \le 1\}.
    \end{aligned}
    \end{equation}
    
    Linear constraints in~\eqref{stable_set_quadratic} can be replaced by quadratic ones. The condition that for every edge, at most one end vertex can be in $S$, means that if $v_i \in S$, then $ x_j = 0$ for all $\{v_i, v_j\} \in E$. Hence, the stability number of a graph $G$ can be computed as the optimal value of the following optimization problem with linear objective function and quadratic constraints:
    \begin{equation}\label{stable_set_quadratic_2}
    \begin{aligned}
    \alpha(G)~=~\max \{e^Tx \mid ~x \in \{0, 1\}^n \wedge ~\forall \{v_i, v_j\} \in E,\; x_ix_j = 0\}.
    \end{aligned}
    \end{equation}
    
    The problem~(\ref{stable_set_quadratic_2}) can be further reformulated. Since $x_ix_j = 0$ for every edge $\{v_i, v_j\} \in E$, we have that $\sum_{\{v_i, v_j\} \in E} x_ix_j = 0$. We can also show the reverse claim. Indeed, since $x$ is a binary vector, $\sum_{\{v_i, v_j\} \in E} x_ix_j = 0$ implies that for every edge $\{v_i, v_j\} \in E$ we have  $x_i x_j = 0$. Therefore, binary vector $x$ satisfies the edge constraints $x_ix_j = 0$ for all $\{v_i, v_j\} \in E$ if and only if it satisfies the single constraint $\sum_{\{v_i, v_j\} \in E} x_ix_j = 0$. Now, let $A$ be the adjacency matrix of a graph $G$. Then, $\sum_{\{v_i, v_j\} \in E} x_ix_j = \frac{1}{2}x^TAx$. Obviously, $\frac{1}{2}x^TAx = 0 \Leftrightarrow  x^TAx = 0$, so the formulation~(\ref{stable_set_quadratic_2}) is equivalent to the formulation:
    \begin{equation}\label{stable_set_quadratic_3}
    \begin{aligned}
    \alpha(G)~=~\max \{e^Tx \mid ~x \in \{0, 1\}^n \wedge ~x^TAx = 0\}.
    \end{aligned}
    \end{equation}

    We apply a penalty method to~\eqref{stable_set_quadratic_3} to obtain a QUBO problem, which will be the central problem of this paper. Generally, a QUBO problem is a minimization problem of the following form:
    \begin{equation*}
    \begin{aligned}
    z^* ~=~ \min \{f^Tx + x^TQx \mid x \in \{0, 1\}^n \}. 
    \end{aligned}
    \end{equation*}

    To get a QUBO formulation related to the stable set problem, we need to move the quadratic constraint $x^TAx = 0$ in \eqref{stable_set_quadratic_3} into the objective function. 
    The penalty method suggests simply subtracting the left-hand side of this constraint, multiplied by a penalty parameter $\beta > 0$, from the objective function. The new objective function would, therefore, be $e^Tx-\beta x^TAx$. Since the QUBO problem is a minimization problem, we need also to reverse the sign of the objective function.

    With all this, we obtain the following QUBO problem closely related to the stable set problem:
    \begin{equation}\label{QUBO_formulation}
    \begin{aligned}
     \alpha(G,\beta) ~=~ - \min \{-e^Tx+\beta x^TAx \mid x\in \{0,1\}^n \}.
    \end{aligned}
    \end{equation}

    For the sake of clarity, we denote the cost function in \eqref{QUBO_formulation} as:
    \begin{equation*}
    \begin{aligned}
    \text{cost}(x, \beta) = -e^Tx + \beta x^TAx.
    \end{aligned}
    \end{equation*}
    Furthermore, for a subset of vertices $X \subseteq V$, we define:
    \begin{equation}\label{formulation_QUBO}
    \begin{aligned}
    \text{cost}(X, \beta) = \text{cost}(x, \beta),
    \end{aligned}
    \end{equation}
    where $x \in \{0, 1\}^n$ is the indicator vector for $X$. 
    The following observation can help us to better understand the role of the penalty parameter $\beta$.

    \begin{observation}\label{observation_penalty_term}
    Suppose $X\subset V$, $v_i,v_j\in X$ and there is an edge between $v_i,v_j$. Therefore, we have $x_i = x_j = 1$, and the term $x^TAx$ counts this edge twice. Since \eqref{QUBO_formulation} is a minimization problem, we have that for $\beta > 0$, the penalty term $\beta x^TAx$ penalizes each edge in $G[X]$ with the value of $2 \beta$.
    \end{observation}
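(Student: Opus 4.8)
The plan is to expand the quadratic form $x^{T}Ax$ directly from the definition of the adjacency matrix and then track the contribution of a single edge. First I would write $x^{T}Ax = \sum_{i=1}^{n}\sum_{j=1}^{n} A_{ij} x_i x_j$. Since $G$ is a simple undirected graph, $A$ is symmetric with zero diagonal, so $A_{ij} = A_{ji}$ and $A_{ii} = 0$; this is exactly what lets me reorganize the double sum over unordered pairs and discard any self-contributions.

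Next, I would fix an edge $\{v_i, v_j\} \in E$, so that $A_{ij} = A_{ji} = 1$, and observe that the only two terms in the double sum referencing this pair are $A_{ij} x_i x_j$ and $A_{ji} x_j x_i$, each equal to $x_i x_j$. Hence the edge contributes $2 x_i x_j$ to $x^{T}Ax$, which establishes the ``counts twice'' claim and is consistent with the identity $\tfrac{1}{2} x^{T}Ax = \sum_{\{v_i, v_j\} \in E} x_i x_j$ already used to derive~\eqref{stable_set_quadratic_3}.

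Finally, I would specialize to the induced subgraph $G[X]$. An edge $\{v_i, v_j\}$ lies in $G[X]$ precisely when both endpoints belong to $X$, i.e.\ $x_i = x_j = 1$, in which case its contribution is $2 x_i x_j = 2$; every edge with an endpoint outside $X$ has $x_i x_j = 0$ and contributes nothing. Multiplying by $\beta$, each edge of $G[X]$ therefore adds exactly $2\beta$ to the penalty term $\beta x^{T}Ax$, yielding the stated per-edge penalty. I do not expect a genuine obstacle here, as the statement is elementary; the only point requiring care is the bookkeeping of the symmetric off-diagonal entries, making sure the factor of two arises from the two ordered pairs $(i,j)$ and $(j,i)$ and not from any diagonal term, which is ruled out by $A_{ii} = 0$.
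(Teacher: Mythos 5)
Your proposal is correct and follows the same reasoning the paper relies on: the observation is justified by the symmetry and zero diagonal of $A$, i.e.\ the identity $\sum_{\{v_i,v_j\}\in E} x_i x_j = \tfrac{1}{2}x^TAx$ already used to derive~\eqref{stable_set_quadratic_3}, so each edge of $G[X]$ contributes $2$ to $x^TAx$ and hence $2\beta$ to the penalty term. The paper states this as a self-contained observation without a separate proof, and your expansion of the double sum is just the explicit bookkeeping of that same argument.
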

    
    Based on Observation~\ref{observation_penalty_term}, we can rewrite the cost function~\eqref{formulation_QUBO} as:
    \begin{equation}\label{cost_function_rewitten}
    \begin{aligned}
        \text{cost}(X,\beta) = -\vert X \vert + 2\beta \vert E(G[X]) \vert .
    \end{aligned}
    \end{equation}

    Note that the optimal value of \eqref{QUBO_formulation} may not be equal to $\alpha(G)$ for some values of the parameter $\beta$. Likewise, the optimal solution $X$ may not be a stable set. In the following, we provide insights into how $\alpha(G)$ and $\alpha(G,\beta)$ are related for different values of $\beta$.
    
    \begin{lemma}\label{minimal_beta}
    Let $G = (V, E)$ be a graph. If $\beta \geq \frac{1}{2}$, then 
    $$\alpha(G) = \alpha(G,\beta).$$ 
    \end{lemma}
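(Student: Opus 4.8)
The plan is to work with the rewritten cost function~\eqref{cost_function_rewitten}, under which
$$\alpha(G,\beta) = -\min_{X \subseteq V} \text{cost}(X,\beta) = \max_{X \subseteq V}\bigl(\vert X\vert - 2\beta\,\vert E(G[X])\vert\bigr),$$
and to establish the two inequalities $\alpha(G,\beta)\ge \alpha(G)$ and $\alpha(G,\beta)\le \alpha(G)$ separately.

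For the lower bound I would evaluate the cost at a maximum stable set $S$, so that $\vert S\vert = \alpha(G)$. Since $S$ induces no edges, $\vert E(G[S])\vert = 0$ and $\text{cost}(S,\beta) = -\alpha(G)$; hence $\min_{X}\text{cost}(X,\beta) \le -\alpha(G)$, which gives $\alpha(G,\beta)\ge \alpha(G)$. This direction holds for every $\beta > 0$ and does not use the threshold on $\beta$.

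The upper bound is the substantive part. I would show that any $X \subseteq V$ which is not a stable set can be shrunk to a stable set without increasing its cost. If $G[X]$ contains an edge, choose a vertex $v \in X$ incident to $d \ge 1$ edges of $G[X]$, and set $X' = X \setminus \{v\}$. Then $\vert X'\vert = \vert X\vert - 1$ and $\vert E(G[X'])\vert = \vert E(G[X])\vert - d$, so
$$\text{cost}(X',\beta) = \text{cost}(X,\beta) + 1 - 2\beta d.$$
The crucial point is that $\beta \ge \tfrac12$ together with $d \ge 1$ forces $1 - 2\beta d \le 0$, so $\text{cost}(X',\beta) \le \text{cost}(X,\beta)$. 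Iterating this removal eliminates all edges in finitely many steps (as $\vert X\vert$ strictly decreases) and produces a stable set $X^\ast$ with $\text{cost}(X^\ast,\beta) \le \text{cost}(X,\beta)$. Since $X^\ast$ is stable, $\text{cost}(X^\ast,\beta) = -\vert X^\ast\vert \ge -\alpha(G)$, whence $\text{cost}(X,\beta) \ge -\alpha(G)$ for every $X \subseteq V$. Taking the minimum over $X$ yields $\alpha(G,\beta) \le \alpha(G)$, and combining the two bounds finishes the argument.

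The main place where the hypothesis is genuinely used — and hence the expected crux — is the inequality $1 - 2\beta d \le 0$ in the vertex-removal step, which is exactly why the threshold equals $\tfrac12$: the marginal reward for keeping one extra vertex is $1$, while each incident edge is penalized by $2\beta$, so deleting a vertex lying on an edge is never harmful precisely once $2\beta \ge 1$. For $\beta < \tfrac12$ this monotonicity breaks down, consistent with the paper's later observation that smaller penalties need not recover $\alpha(G)$.
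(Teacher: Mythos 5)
Your proof is correct and follows essentially the same route as the paper: the lower bound by evaluating the cost at a maximum stable set, and the upper bound by iteratively deleting a vertex incident to an edge of $G[X]$ and using $\beta \ge \tfrac12$ to show the cost does not increase. The only cosmetic difference is that you track the exact degree $d$ of the removed vertex (cost change $1-2\beta d$) whereas the paper bounds the change by $1-2\beta$ using a single edge, and you argue for arbitrary $X$ before minimizing rather than starting from an optimal QUBO solution; neither changes the substance.
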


    \begin{proof}
    Let $x^* \in \{0, 1\}^n$ be an optimal solution for the problem (\ref{stable_set_quadratic_3}). Since the formulation (\ref{stable_set_quadratic_3}) is equivalent to the formulation (\ref{stable_set_quadratic_2}), we have that $x^*_i x^*_j = 0$ for all $\{v_i, v_j\} \in E$. Hence, the optimal solution $x^*$ is a stable set. Furthermore, $x^*$ is a feasible solution for the formulation~(\ref{QUBO_formulation}) with $-e^Tx^*+\beta(x^*)^TAx^*=-e^Tx^*$, and therefore $\alpha(G) \leq \alpha(G, \beta)$.
    
    Now let $x^* \in \{0, 1\}^n$ be an optimum solution for \eqref{QUBO_formulation} and $X^*$ the corresponding subset of $V$. If $X^*$ is a stable set, then the optimum value of \eqref{QUBO_formulation} is $e^Tx^*$, hence  $\alpha(G)\ge \alpha(G,\beta)$. Otherwise, there exists $\{v_i, v_j\} \in E$, such that $x^*_i = x^*_j = 1$. If we define new $\tilde x$, which differs from $x^*$ in the $j$-th component only, which is set to $0$, this implies $e^T\tilde x=e^Tx^*-1$ and $\beta \tilde x^TA\tilde{x}$ $\leq$ $\beta  (x^*)^TAx^*-2\beta$. Therefore, this change has the following effect on the cost function of~\eqref{QUBO_formulation}:
    \begin{align*}
    \text{cost}(\tilde x, \beta) &= -  e^T\tilde{x}  + \beta\tilde x^TA\tilde{x} \\
    & \leq - e^Tx^* + 1 + \beta(x^*)^TAx^*-2\beta \\
    & = \text{cost}(x^*, \beta) + 1 - 2\beta \\
    & \leq \text{cost}(x^*, \beta)
    \end{align*}
    where the second inequality follows from the fact that $\beta \geq \frac{1}{2}$.
    
    Therefore, since by assumption we have $\beta \geq \frac{1}{2}$, we can iteratively remove one endpoint for each remaining edge between two vertices in $G[X^*]$ without increasing the value of the cost function, which finally yields a stable set $\hat X$ with indicator vector $\hat x$ such that: 
    $$\text{cost}(x^*, \beta) \geq \text{cost}(\hat x, \beta) = -e^T\hat x \ge -\alpha(G).$$
    Hence, $\alpha(G,\beta) = -\text{cost}(x^*, \beta) \le \alpha(G)$. Combining with the first part of the proof, we get the equality.
    \end{proof}

    If $\beta < \frac{1}{2}$, then the equality $\alpha(G) = \alpha(G,\beta)$ may not be true, as the following counterexample shows.
    
    \begin{example}
    Let $G = (V, E)$ with $V = \{v_1, v_2\}$, $E = \{\{v_1, v_2\}\}$ and assume $\beta < \frac{1}{2}$. For $x=(1, 1)^T$, we have that
    $\text{cost}(x, \beta) = -e^Tx + \beta x^TAx =-2+2\beta<-1$.
    Therefore, the minimum of $\text{cost}(x, \beta)$ over all binary vectors is smaller than $-1$, and thus $\alpha(G,\beta) > 1 = \alpha(G)$.
    \end{example}

    \begin{corollary}\label{beta_small}
    Let $G = (V, E)$ be a graph, and let $\beta < \frac{1}{2}$. Then we have \begin{equation*}
    \begin{aligned}
    \alpha(G, \beta) \geq \alpha(G).
    \end{aligned}
    \end{equation*}      
    \end{corollary}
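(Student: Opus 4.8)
The plan is to show that the lower bound $\alpha(G)\le\alpha(G,\beta)$ holds already for every $\beta>0$, so that the case $\beta<\tfrac12$ needs no new argument. Concretely, I would exhibit a single feasible point of the QUBO whose cost certifies the bound.

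First I would take an optimal solution $x^*$ of the stable set problem, i.e.\ the indicator vector of a maximum stable set $S$ with $e^Tx^*=\vert S\vert=\alpha(G)$. Because $S$ is stable, the induced subgraph $G[S]$ has no edges, so $(x^*)^TAx^*=0$; equivalently, by~\eqref{cost_function_rewitten}, $\text{cost}(x^*,\beta)=-\vert S\vert+2\beta\cdot 0=-\alpha(G)$, and this value does not depend on $\beta$.

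Next I would use that $x^*\in\{0,1\}^n$ is a feasible point for the unconstrained minimization defining $\alpha(G,\beta)$ in~\eqref{QUBO_formulation}. Hence the minimum there is at most $\text{cost}(x^*,\beta)=-\alpha(G)$, and negating yields $\alpha(G,\beta)=-\min_x\text{cost}(x,\beta)\ge\alpha(G)$, which is exactly the claim.

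The point to be careful about — and really the only thing distinguishing this from the first half of the proof of Lemma~\ref{minimal_beta} — is to notice that this direction never invokes $\beta\ge\tfrac12$: the penalty term vanishes on a stable set, so the argument is valid for all $\beta>0$, in particular for $\beta<\tfrac12$. There is thus no genuine obstacle; the content of the corollary is merely that the easy inequality of Lemma~\ref{minimal_beta} survives when the penalty is too small to guarantee equality, whereas the reverse inequality, which does require $\beta\ge\tfrac12$, may fail, as the preceding example illustrates.
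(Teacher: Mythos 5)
Your proposal is correct and is essentially identical to the paper's argument: the paper proves this corollary by pointing back to the first part of the proof of Lemma~\ref{minimal_beta}, which is precisely your observation that the indicator vector of a maximum stable set is a feasible point with vanishing penalty term, so the bound holds for every $\beta>0$. No differences worth noting.
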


    \begin{proof}
    The inequality $\alpha(G, \beta) \geq \alpha(G)$ follows from the first part of the proof of Lemma~\ref{minimal_beta}, which is independent of $\beta$.
    \end{proof}

     As the last step, we formulate the stable set problem so that it can be embedded on D-Wave QPUs. D-Wave solvers are designed to minimize a QUBO formulated as:
    \begin{equation*}\label{QUBO_dwave}
    \begin{aligned}
    \min \quad & x^TQx,
    \end{aligned}
    \end{equation*}
    where $x$ is a binary variable. Since $x_i^2 = x_i$, we can write $-e^Tx$ as $x^T(-I)x$. Thus, we set $Q = -I + \beta A$ and obtain the following problem formulation:
    \begin{equation}\label{QUBO_2}
    \begin{aligned}
    \alpha(G,\beta)=- \min \{ x^T(-I+\beta A)x \mid x \in \{0, 1\}^n \}. 
    \end{aligned}
    \end{equation} 

    As mentioned in Section~\ref{Related_work}, several QUBO formulations for the stable set problem or related to the stable set problem similar to the formulations~(\ref{QUBO_formulation}) and (\ref{QUBO_2}) have already been introduced. However, the value of the penalty parameter was usually set to $\beta = 1$. This is interesting since the choice of parameters may impact the quality of solutions obtained from quantum annealers, as discussed in the following text.

    \subsection{The scaling problem for quantum annealers}\label{scaling_problem}

    Several authors investigated the relationship between the values of the penalty terms in a QUBO formulation and the quality of results; see, for instance,~\cite{Choi} and~\cite{Roch}. Nevertheless, although it is empirically proven that such a relationship exists, there is still no exact rule on how to set penalty terms efficiently. One reason is the scaling problem that occurs when working with QPUs. For the sake of brevity, we will briefly present this problem by giving a toy example and omit the theoretical framework. An interested reader can find further details in~\cite{Yarkoni}.

    To present the scaling problem, we give an example similar to the one outlined in D-Wave's documentation~\cite{auto-scaling-chain-strength-d-wave-docs}. We consider the following function, given as a QUBO problem:
    $$
    E_1(q_1, q_2) = 0.1q_1 + 0.2q_2 + 0.3q_1q_2.
    $$
    This function can also be written in a form:
    $$
    E_2(q_1, q_2) = 0.1q_1 + 0.2q_2 + 5q_1q_2,
    $$
    which is equivalent to $E_1$ in the sense that the values of the cost function for the solutions $E_1$ and $E_2$ follow the same order, as it can be seen from the following table:
    \begin{center}
    \begin{tabular}{c c|c|c}
    $q_1$ & $q_2$ & $E_1$ & $E_2$\\
    \hline
    0 & 0 & 0 & 0\\
    0 & 1 & 0.2 & 0.2\\
    1 & 0 & 0.1 & 0.1\\
    1 & 1 & 0.6 & 5.3\\
    \end{tabular}
    \end{center}

    However, a quantum annealer does not operate on the whole interval of the given coefficients, but it scales the coefficients to its operating space. Assume a quantum annealer operates on the coefficient interval $[-1, 1]$. Since the coefficients of the function $E_2$ exceed this interval, the quantum annealer will correct them by multiplying the function by $\frac{1}{5}$. This yields the new corrected function $E_2'$: 
    $$
    E_2'(q_1, q_2) = 0.02q_1 + 0.04q_2 + q_1q_2,
    $$
    with the following table of states:
    \begin{center}
    \begin{tabular}{c c|c}
    $q_1$ & $q_2$ & $E_2'$\\
    \hline
    0 & 0 & 0\\
    0 & 1 & 0.04\\
    1 & 0 & 0.02\\
    1 & 1 & 1.06\\
    \end{tabular}
    \end{center}
    As we can see, the difference between the lowest and the second-lowest solution has decreased from $0.1$ in $E_2$ to $0.02$ in $E_2'$. This is a problem because a quantum annealer is an analog device sensitive to noise. Thus, the smaller the difference between the states of the cost function, the greater the probability of error due to noise---this is called the scaling problem. Therefore, the absolute relative scale between coefficients should be as small as possible.

    In the given QUBO~\eqref{QUBO_2}, we only have one penalty parameter---the parameter $\beta$. Since we are interested in the relative scaling between the constants, we set $\beta$ in such a manner that the absolute values of the coefficients of all monomials are as similar as possible. Since the monomials $x_i^2$ have the coefficient $-1$, we note that $2\beta$ should be equal or approximately equal to $\vert-1\vert$, as this mitigates the scaling problem the most. This implies that $\beta$ should be close to $\frac{1}{2}$.

	\section{Analysis of solutions and the post-processing procedure}\label{Section_post_processing}
	
	In this section, we provide an in-depth analysis of data returned by a quantum annealer and develop methods that enable us to improve the quality of the results. In Section~\ref{Analysis_solution}, we first focus on the solutions returned by a quantum annealer and study how to set the penalty parameter $\beta$ for the most accurate results. Then, we analyze all samples returned by a quantum annealer in Section~\ref{Post_processing}. Finally, we propose the post-processing procedure and give its detailed explanation in Section~\ref{subsection_post_processing}.

    \subsection{Analysis of solution}\label{Analysis_solution}

    In Section~\ref{section_QUBO_formulation}, we derived two equivalent QUBO formulations \eqref{QUBO_formulation} and \eqref{QUBO_2} for the stable set problem and showed that the optimal solution yields the stability number of a graph if the value of the penalty parameter $\beta$ is at least $\frac{1}{2}$. However, quantum annealing is a heuristic, so the solution obtained is not necessarily optimal. Moreover, since any binary vector is feasible for \eqref{QUBO_formulation} and \eqref{QUBO_2}, the solution provided by a quantum annealer is not necessarily a stable set, which means that the penalty term in the solution provided by a heuristic based on a QUBO formulation may not be zero.

    We now analyze how to set the penalty parameter $\beta$ for the most accurate results. First, we show how to estimate the size of a stable set contained in a solution to the QUBO problem that is not a stable set and how to extract such a solution. Furthermore, we argue that, within our method, the best choice for the value of the penalty parameter is $\beta = \frac{1}{2}$. This result confirms our previous observation that in the context of the scaling problem presented in Section~\ref{scaling_problem}, the parameter $\beta$ should be close to $\frac{1}{2}$.

    Let $X \subseteq V$ be a solution returned by a quantum annealer, and let $x \in \{0, 1\}^n$ be the indicator vector of $X$. If $\beta x^TAx\neq 0$, then the solution is not a stable set. Nevertheless, we can still obtain a stable set from $X$, as shown in the next statement.

    \begin{proposition}\label{pr:extract_stable_set}
    Let $G = (V, E)$ be a graph and let $X\subseteq V$. Then there exists a stable set $X'\subseteq X$, such that
    \begin{equation*}
    \begin{aligned}
    \vert X' \vert \geq \vert X \vert - \vert E(G[X]) \vert.
    \end{aligned}
    \end{equation*} 
    The computation of $X'$ is done in $\mathcal{O}( \vert E(G[X]) \vert + \vert V(G[X]) \vert)$ time.
    \end{proposition}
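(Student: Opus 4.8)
The plan is to prove the existence bound by exhibiting an explicit greedy procedure that destroys the edges of $G[X]$ one by one while deleting as few vertices as possible, and then to argue that this same procedure can be implemented within the claimed linear time. The guiding observation is that the complement $X \setminus X'$ of a stable set $X'$ inside $X$ is exactly a vertex cover of the induced subgraph $G[X]$; hence it suffices to produce a vertex cover of $G[X]$ of size at most $\vert E(G[X]) \vert$, and the bound $\vert X' \vert \ge \vert X \vert - \vert E(G[X]) \vert$ follows by complementation.

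For the existence, I would process the edges of $G[X]$ in a single pass, maintaining a Boolean membership array that initially marks every vertex of $X$ as present. When an edge $\{v_i, v_j\}$ is examined, I check whether both endpoints are still present; if so, I delete one of them (say $v_j$) by flipping its flag, and otherwise I leave the configuration unchanged. I would then verify the two required properties. Correctness, i.e. that the surviving set $X'$ is stable, follows because once an edge has at most one present endpoint it remains so under later deletions, so after all edges have been examined, no edge has both endpoints in $X'$. The cardinality bound follows because each deletion is charged to the edge currently being examined, each edge is examined exactly once, and hence at most $\vert E(G[X]) \vert$ vertices are removed.

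For the running time, the key point is that no edge is revisited and each examination costs $\mathcal{O}(1)$: a constant number of array look-ups and at most one flag flip. Initializing the membership array over the vertices of $G[X]$ costs $\mathcal{O}(\vert V(G[X]) \vert)$, and the single sweep over the edge list costs $\mathcal{O}(\vert E(G[X]) \vert)$, which together give the stated $\mathcal{O}(\vert E(G[X]) \vert + \vert V(G[X]) \vert)$ bound; reading off $X'$ from the final array fits within the same budget.

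The arguments are elementary, so I do not expect a serious mathematical obstacle; the only point that needs care is the correctness invariant, namely that examining each edge exactly once, rather than repeatedly until the induced graph becomes edge-free, already yields a stable set. This hinges on the monotonicity of the deletions: since vertices are only ever removed and never reinserted, an edge that is already covered when it is examined stays covered, so a single pass certifies that every edge is covered. Keeping the bookkeeping linear, in particular avoiding any per-deletion rescan of incident edges, is what secures the linear-time claim.
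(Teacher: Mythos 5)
Your proposal is correct and follows essentially the same route as the paper: iterate once over the edges of $G[X]$, deleting one endpoint of any edge whose endpoints are both still present, which removes at most $\vert E(G[X]) \vert$ vertices and leaves a stable set, all in $\mathcal{O}(\vert E(G[X]) \vert + \vert V(G[X]) \vert)$ time. Your version is slightly more explicit about the monotonicity invariant and the constant-time bookkeeping (and dispenses with the paper's separate case $\vert E(G[X]) \vert \geq \vert X \vert$), but the underlying argument is the same.
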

    
    \begin{proof}
    We consider the graph $G[X]$. If $\vert E(G[X]) \vert \geq \vert X \vert$, the statement trivially holds since we can take $X'$ to be a single vertex. Now assume that $\vert E(G[X]) \vert < \vert X \vert$. Then, iterating over the set $E(G[X])$, we remove for each edge one of its adjacent vertices. Since there are $\vert E(G[X]) \vert$ edges, we remove at most that many vertices. As a result, we obtain a stable set of size at least $\vert X \vert - \vert E(G[X]) \vert$.

    The calculation of the induced subgraph $G[X]$ takes $\mathcal{O} (|E(G[X])| + |V(G[X])|)$ time while iterating over all edges and removing adjacent vertices takes $\mathcal{O}(\vert E(G[X]) \vert)$ time. Hence, the computation of $X'$ is done in $\mathcal{O}(\vert E(G[X]) \vert + \vert V(G[X]) \vert)$ time.
    \end{proof}

    Proposition~\ref{pr:extract_stable_set} asserts that even if a solution of \eqref{QUBO_2} is not a stable set and the number of edges is sufficiently small, we can still efficiently extract a stable set. We will now argue that when we are solving \eqref{QUBO_2} with a heuristic, a very reasonable choice for the penalization parameter is still $\beta = \frac{1}{2}$. We start by giving the following example. 

    \begin{example}\label{example}
    Let $\beta = 1$. Then, the penalty component of the QUBO formulation \eqref{QUBO_2} is $x^TAx$. Expanding $x^TAx$, we notice that each quadratic term appears twice according to Observation~\ref{observation_penalty_term}, resulting in a penalty of $2$ for each edge. Let us assume there exists a stable set of size at least of $25$ included in a subset of vertices $X \subseteq V$ such that $\vert X \vert = 30$ and $\vert E(G[X]) \vert = 5$. Then, $\text{cost}(X, 1) = -20$, so stable sets of sizes $21, 22, 23, 24$ with respective costs of $-21, -22, -23, -24$ will be more favorable than the set $X$. Therefore, even if only approximate, the solver will likely not return the set $X$ as the optimum. On the other hand, as indicated in Proposition~\ref{pr:extract_stable_set}, we can easily extract a stable set of size $25$ from the set $X$, meaning that the cost function with $\beta=1$ is not accurately representing the desired solution quality. 
    \end{example}

    From the given example, we note that in the context of the previously given method, choosing the penalization parameter $\beta > \frac{1}{2}$ may discard superior solutions due to the imprecise cost function. To explain this in a more general context, we first need to prove the following statement.
    \begin{lemma}
    \label{le:epsilon_graph}
        For every $\epsilon > 0$ there exist a graph $G$ and a subset $X \subseteq V$ such that 
        \begin{align}\label{epsilon}
        2\epsilon\vert E(G[X]) \vert > 1.
        \end{align}
    \end{lemma}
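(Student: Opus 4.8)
The plan is to observe that the inequality \eqref{epsilon} is equivalent to $\vert E(G[X]) \vert > \frac{1}{2\epsilon}$, so it suffices to exhibit, for each fixed $\epsilon > 0$, a graph $G$ together with a subset $X \subseteq V$ whose induced subgraph $G[X]$ contains more than $\frac{1}{2\epsilon}$ edges. Since $\epsilon$ is a fixed positive constant, the right-hand side $\frac{1}{2\epsilon}$ is a fixed finite number, and the only ingredient I need is the freedom to make the edge count of an induced subgraph arbitrarily large. This freedom is clearly available, since the number of edges of a graph on $n$ vertices is unbounded as $n$ grows.

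Concretely, I would take $X = V$ and let $G = K_n$ be the complete graph on $n$ vertices, so that $G[X] = G$ and $\vert E(G[X]) \vert = \binom{n}{2} = \frac{n(n-1)}{2}$. Because $\frac{n(n-1)}{2} \to \infty$ as $n \to \infty$, I can choose $n$ large enough---for instance any $n$ satisfying $n(n-1) > \frac{1}{\epsilon}$, which certainly exists---so that $\binom{n}{2} > \frac{1}{2\epsilon}$. Multiplying both sides of this strict inequality by $2\epsilon > 0$ then gives $2\epsilon \vert E(G[X]) \vert > 1$, which is exactly \eqref{epsilon}. Any other family of graphs whose number of edges diverges would serve equally well in place of the complete graphs.

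I expect this statement to carry essentially no obstacle: it is a pure existence claim, and the entire argument reduces to choosing $n$ large depending on $\epsilon$. The one point worth stating explicitly is precisely that the graph is allowed to depend on $\epsilon$, i.e.\ it may grow as $\epsilon \to 0^{+}$; the lemma does not demand a single uniform graph that works for all $\epsilon$. The genuine content lies in the surrounding discussion, where this lemma is presumably invoked with $\epsilon = \beta - \frac{1}{2} > 0$ to show that for any $\beta > \frac{1}{2}$ there is a graph on which the surplus penalty $2\epsilon\vert E(G[X])\vert$ exceeds $1$, so that an exact QUBO solver may discard a subset $X$ even when $X$ contains, after the extraction of Proposition~\ref{pr:extract_stable_set}, a larger stable set than the returned optimum.
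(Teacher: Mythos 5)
Your proof is correct and follows the same basic route as the paper: both arguments exhibit, for the given $\epsilon>0$, a graph whose induced subgraph on $X=V$ has more than $\frac{1}{2\epsilon}$ edges. The only difference is the witness: the paper takes $G$ to be a disjoint union of $m=\lceil\frac{1}{2\epsilon}+1\rceil$ edges, whereas you take a complete graph $K_n$ with $n(n-1)>\frac{1}{\epsilon}$. For the lemma as stated this is immaterial---any family with unboundedly many edges works, as you yourself observe. It is worth noting, though, that the paper's sparse witness is tailored to the discussion that follows the lemma, where the quantity $solution=\vert X\vert-\vert E(G[X])\vert$ must be a genuine, positive stable-set size extractable from $X$ so that the comparison ``$solution-1>cost$'' identifies an actual inferior stable set being favored. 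For the matching with $X=V$ one gets $solution=2m-m=m=\alpha(G[X])$, so that argument goes through; for $K_n$ with $n\ge 4$ the same quantity is negative and the downstream reasoning would be vacuous. So if your witness is meant to be carried into the application you sketch in your final paragraph, a sparse example like the paper's is the right choice.
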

    \begin{proof} We will prove the lemma by providing an example of such graph. Let $\varepsilon>0$ and set $m=\lceil\frac{1}{2\epsilon} + 1\rceil$. The graph which will serve as an example is a union of $m$ edges, i.e, we have  $\vert E \vert  = m$, $\vert V \vert = 2m$ and $\deg(v) = 1$ for every $v \in V$. For $X = V$ it follows that:
        $$
        \vert E(G[X]) \vert = \vert E(G[V]) \vert = |E| = m > \frac{1}{2\epsilon},
        $$
        so the inequality \eqref{epsilon} holds for this graph.
    \end{proof}
    
    Now, suppose we have a graph $G=(V,E)$, a set $X \subseteq V$ and $\beta =\frac{1}{2} + \epsilon$ where $\epsilon > 0$. The value of the cost function~\eqref{cost_function_rewitten} is:
    $$
    -\vert X \vert+2\beta \vert E(G[X]) \vert = -(\vert X \vert-(1+2\epsilon) \vert E(G[X]) \vert).
    $$
    To improve clarity and simplify the explanation, we will flip the sign of the cost function and approach the problem as a maximization problem. This adjustment transforms the cost function into:
    $$
    cost = \vert X \vert-(1+2\epsilon) \vert E(G[X])\vert.
    $$
    As established in Proposition~\ref{pr:extract_stable_set}, we can easily extract a stable set of size:
    $$
    solution = \vert X \vert- \vert E(G[X]) \vert
    $$
    out of the $X$. Since we are dealing with an arbitrary graph, we can select $G$ and the set $X \subseteq V$ such that:
    \begin{align*}
    solution - cost = 2\epsilon\vert E(G[X]) \vert > 1
    \end{align*}
    holds, which implies that $solution - 1 > cost$. Existence of such graph is ensured by Lemma~\ref{le:epsilon_graph}.
    Since the cost function value of a stable set is equal to the cardinality of the stable set itself, this means that some stable set $X'$ of cardinality $\vert X' \vert = solution-1$ has better cost function value than our set $X$, even though we can easily extract a stable set of size $solution$ out of the set $X$. Therefore, the cost function imprecisely evaluates our solution and can potentially favor some inferior solutions. To avoid this issue, $\epsilon$ should be set to $0$, ensuring that $solution - cost$ cannot exceed $1$. Consequently, $\beta = \frac{1}{2}$ is the most suitable parameter value. If it is any larger, the best candidate set might not be detected as we have just shown, and if it is smaller, Equation~\eqref{QUBO_formulation} does not hold anymore due to Corollary~\ref{beta_small}.

    \subsection{Analysis of samples}\label{Post_processing}

    We now analyze all other samples returned by a quantum annealer. When working with quantum annealers from D-Wave, we can specify how often QPU should run a problem once the problem has been programmed onto the QPU hardware. Each run of a problem is also known as a read, an annealing cycle, or a sample. A sample with the lowest cost function will be returned as the solution to the problem. In the following, we show that, when dealing with the stable set problem, the solution returned by the solver does not necessarily imply that other samples cannot potentially offer better results. Furthermore, we will develop a theory for identifying potential samples that could lead to improved outcomes and present the idea of our post-processing procedure.

    Let $X\subseteq V$ be a solution returned by a quantum annealer. If $X$ is not a stable set, we know from Proposition~\ref{pr:extract_stable_set} that we can extract a stable set of size $\vert X \vert- \vert E(G[X]) \vert$. Now assume that $X\subseteq V$ is any sample returned by a quantum annealer. Then, if $X$ is not a stable set, we could again employ the procedure from Proposition~\ref{pr:extract_stable_set} and extract a stable set. Nevertheless, we show that for samples that are not stable sets, we can obtain even better estimations of the size of the stable sets that can be extracted. 
    
    Let us return to Example~\ref{example}. For a subset of vertices $X \subseteq V$ with $\vert X \vert = 30$ and $\vert E(G[X]) \vert = 5$, Proposition \ref{pr:extract_stable_set} assures us that we can always extract a stable set of size at least $25$ from it. However, it may occur, for instance, that one of the vertices has a degree of $5$, and by removing it, we obtain a stable set of size $29$. This property is generalized with the annihilation number of the graph, a graph parameter that is also an upper bound on the stability number of a graph, as shown in~\cite{Pepper}.
    
    \begin{definition}\label{definition_an}
        For a graph $G = (V, E)$, where $d_1 \leq d_2 \leq \ldots \leq d_n$ for $d_i = d(v_i)$, the \textit{annihilation number} $a = a(G)$ is defined as the largest index $a$ such that $\sum_{i=1}^a d_i \leq m$, where $m$ is the number of edges.
    \end{definition}
    We have the following   relationship between the annihilation number and the stability number of a general graph.
     \begin{theorem}\label{th:upper_bound_pepper}(Pepper,~\cite{Pepper})
        For any graph $G$, 
        \begin{equation}\label{annihilation}
        \alpha(G) \leq a(G). 
        \end{equation}
    \end{theorem}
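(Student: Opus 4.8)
The plan is to bound the sum of degrees over a maximum stable set and then compare it against the defining partial sum of the annihilation number. First I would fix a maximum stable set $S \subseteq V$, so that $\vert S\vert = \alpha(G)$, and examine the quantity $\sum_{v \in S} d(v)$. The key structural observation is that $S$ contains no edges, so every edge incident to a vertex of $S$ has its other endpoint in $V \setminus S$; such an edge is therefore counted exactly once in the sum $\sum_{v \in S} d(v)$. Consequently this sum equals the number of edges meeting $S$, which is at most the total number of edges, giving
\begin{equation*}
\sum_{v \in S} d(v) \le m.
\end{equation*}

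The second ingredient is the elementary extremal fact that, among all subsets of $V$ of a fixed cardinality $k$, the sum of degrees is minimized by taking the $k$ vertices of smallest degree. Applying this with $k = \alpha(G)$ to the set $S$, I would conclude
\begin{equation*}
\sum_{i=1}^{\alpha(G)} d_i \le \sum_{v \in S} d(v) \le m,
\end{equation*}
where $d_1 \le d_2 \le \cdots \le d_n$ is the sorted degree sequence as in Definition~\ref{definition_an}.

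The final step is simply to invoke the definition of $a(G)$. The displayed inequality shows that $\alpha(G)$ is an index $a$ for which the partial degree sum $\sum_{i=1}^{a} d_i$ does not exceed $m$. Since $a(G)$ is by definition the \emph{largest} such index, it follows that $\alpha(G) \le a(G)$, which is the claim~\eqref{annihilation}.

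I do not expect a genuine obstacle here; the argument is short. The only point requiring care is justifying that $\sum_{i=1}^{\alpha(G)} d_i$, the sum of the $\alpha(G)$ \emph{smallest} degrees in the whole graph, is bounded by $\sum_{v\in S} d(v)$ rather than the reverse. This is immediate from the rearrangement/extremal fact above, but it is the one place where the sortedness of the degree sequence in Definition~\ref{definition_an} is essential, and it is what allows the transition from a statement about the specific set $S$ to a statement about the globally smallest degrees.
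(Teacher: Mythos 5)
Your proof is correct. Note, however, that the paper does not actually prove Theorem~\ref{th:upper_bound_pepper}: it cites Pepper for the result and only offers an informal plausibility sketch based on a greedy process (repeatedly deleting the highest-degree vertex and tracking, in a ``best case'' scenario, how quickly the total degree sum can fall to zero). Your argument is genuinely different and, unlike the paper's narrative, is a complete and rigorous proof. The two key steps --- that a stable set $S$ spans no edges, so $\sum_{v\in S} d(v)$ counts each incident edge exactly once and is therefore at most $m$, and that the sum of the $\alpha(G)$ smallest degrees is bounded above by the degree sum over the specific set $S$ --- are both sound, and the conclusion $\alpha(G)\le a(G)$ then follows directly from Definition~\ref{definition_an}, since $a(G)$ is the \emph{largest} index whose partial degree sum stays below $m$. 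What your approach buys is a short, self-contained double-counting proof; what the paper's sketch buys is only intuition for why the annihilation number is built from the smallest degrees, at the cost of rigor (its ``best case scenario'' step is not justified). You have correctly identified the one delicate point, namely the direction of the inequality between the globally smallest degrees and the degrees of $S$, and you have handled it correctly.
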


    The original paper details the justification for this upper bound, but the concept is rather straightforward. Let degrees of vertices be defined as above. In each iteration, we remove one vertex, and we repeat this until we have a stable set. To accelerate the formation of a stable set, we aim to remove the maximum number of edges in each iteration. Consequently, we prioritize removing the vertex with the highest degree in the graph, specifically vertex $v_n$. Removing the vertex of degree $d_n$, the sum of all degrees drops by $2d_n$. Here we assume the best case scenario, which is that this reduction occurs in vertices $v_i, \ldots, v_{i+d_n-1}$, where $v_i$ represents the first non-zero degree vertex. By preserving the highest degree vertices for as long as possible, we accelerate the attainment of a stable set. When we remove enough vertices, the total number of edges will drop to zero, leaving the remaining vertices as the upper bound for the stability number of $G$.

    The bound~(\ref{annihilation}) may not be effective for general graphs. However, we will not apply it to compute upper bounds for general graphs. Instead, we will use it to compute bounds for graphs induced from the samples returned by a quantum annealer. These induced graphs possess the property of being highly sparse. This is due to the annealer's objective of finding a set $X$ that minimizes the cost function $-\vert X \vert + 2\beta\vert E(G[X]) \vert$. Consequently, in most cases, even when $\beta = \frac{1}{2}$, the returned samples will have negative cost since otherwise, even a single vertex, constituting a stable set, would have a lower cost function than the set $X$. Working with sparse graphs implies low vertex degrees, so the aforementioned upper bound~\eqref{annihilation}, which considers vertex degrees, effectively reduces the number of samples that need to be evaluated.

    The statements of Proposition~\ref{pr:extract_stable_set} and Theorem~\ref{th:upper_bound_pepper} allow us to obtain bounds for the samples that are not stable sets and to identify whether these samples may improve the solution returned by a quantum annealer. To assess potential improvements, we need to recompute the stability numbers of such samples. Given that these samples are sparse, we can reduce the computational costs by considering their connected components, as stated in the next observation.

    \begin{observation}\label{connected_components}
        Let $G$ be a graph and let $C_1, C_2, \ldots, C_l$ be connected components of $G$. Then 
        $$\alpha(G) = \sum_{i=1}^{l} \alpha(C_i).$$
    \end{observation}

    Altogether, combining Proposition~\ref{pr:extract_stable_set}, Theorem~\ref{th:upper_bound_pepper}, and Observation~\ref{connected_components}, we develop the method for enhancing the solutions returned by a quantum annealer. In the following text, we provide a detailed explanation of this method, which we call the post-processing procedure.

    \subsection{Post-processing procedure}\label{subsection_post_processing}

    The main goal of the post-processing procedure is to improve the solution returned by a quantum annealer. For this purpose, we collect samples that could lead to better solutions and recompute their stability numbers. Nevertheless, this recalculation can be done using heuristic or exact methods, not necessarily a quantum annealer. It is also possible to apply a different penalty term when recalculating the stability number compared to the one used for the computations with a quantum annealer. To distinguish between the solvers utilized, we give the following definition.

    \begin{definition}
    We define solver $S(G, \beta)$ as a function which takes as input a graph $G$ and the value of the parameter $\beta$ and returns the array of solutions $[X] = [X_1, \ldots, X_k]$ such that $\text{cost}(X_1, \beta) \leq \text{cost}(X_2, \beta) \leq \ldots \leq \text{cost}(X_k, \beta)$. Furthermore, we define the computed value $\hat{\alpha}(G, \beta, S)$ as:
    $$\hat{\alpha}(G, \beta, S) = -\text{cost}(X_1, \beta).$$
   \end{definition}

   Let $S = S(G, \beta)$ denote the solver we use to obtain the initial array of solutions $[X] = [X_1, \ldots, X_k]$, and let $S_{post} = S(G[X_i], \beta_{post})$ denote the solver for recomputing the stability number of a sample $X_i$. The post-processing steps are as follows: first, we take a look at $X_1$, which has the lowest cost, and define the current best solution as $\best = \vert X_1 \vert - \vert E(G[X_1]) \vert$. If $X_1$ is a stable set, then $\vert E(G[X_1]) \vert = 0$, and $\vert X_1 \vert = -\text{cost}(X_1, \beta)$. Otherwise, from Proposition~\ref{pr:extract_stable_set}, we know that we can extract a stable set of size at least $\vert X_1 \vert - \vert E(G[X_1]) \vert$ from $X_1$.

   We go through each sample $X_i$ for $i=1, \ldots, k$ and check whether $a(G[X_i]) > \best$. If $G[X_i]$ is a stable set, then $a(G[X_i]) \leq \best$ trivially holds since samples are sorted by cost, implying that reevaluating the stability number for this sample cannot yield an improved solution. However, if $G[X_i]$ is not a stable set but $a(G[X_i]) > \best$, we recompute the solution on the induced graph $G[X_i]$ using $S_{post}$. For this purpose, we use Observation~\ref{connected_components} and calculate the stability number of the sample $G[X_i]$ by considering its connected components. If the new solution surpasses $\best$, we update its value. However, if the condition $a(G[X_i]) > best$ is not fulfilled, and since $a(G[X_i])$ is the upper bound for $\alpha(G[X_i])$ according to Theorem~\ref{th:upper_bound_pepper}, we know that the sample $X_i$ cannot offer a better solution than the one we currently have, so we can skip the recalculation on $X_i$.  
   
   In the end, the procedure returns the cardinality of the best solution found. We denote this value by $\hat{\alpha}_{post}([X], \beta_{post}, S_{post})$. The steps of the general post-processing procedure are outlined in Algorithm~\ref{alg}. 

    \SetKwComment{Comment}{/* }{ */}
    \RestyleAlgo{ruled}
    \begin{algorithm}
    \caption{Post-processing procedure, $\hat{\alpha}_{post}([X], \beta_{post}, S_{post})$}\label{alg}
    \KwData{$[X]$ from $S$, $\beta_{post}$, $S_{post}$} 
    $best \gets \vert X_1 \vert - \vert E(G[X_1])\vert $ \\    
    \For{$i \gets 1$ to $k$}{
    \If{$a(G[X_i]) > best$}
    {
      $[C_1, \ldots, C_l] \gets connected\_components(G[X_i])$\\
      $s \gets \sum_{j=1}^{l}\hat{\alpha}(C_j, \,\beta_{post},\, S_{post})$\\
      \If{$s > best$}{
      $best \gets s$
      }
    }
    }
    \textbf{return} $best$
    \end{algorithm}   

    Finally, we explain how we used Algorithm~\ref{alg} for our computations. As solver $S$, we mostly use D-Wave's quantum annealer. Nevertheless, we do not use QPU to post-process samples. This is because these samples are small compared to the overall problem and can be recomputed relatively easily and cheaply. We have chosen to recompute them with the simulated annealing algorithm, which is part of the D-Wave library we use to interact with QPU. Although the simulated annealing again returns an array of solutions, we just focus on the best solution and check whether it is a stable set. If the solution is a stable set, then we take the computed value. Otherwise, we extract a stable set by using the procedure presented in the proof of Proposition~\ref{pr:extract_stable_set} which removes vertices with edges one by one until a stable set is obtained.

    Regarding the parameter $\beta$, we will computationally study how different values of this parameter influence the quality of the solutions. If we make computations with $\beta \geq \frac{1}{2}$, then we set $\beta_{post} = \beta$. However, if we use $\beta < \frac{1}{2}$, we set $\beta_{post} = \frac{1}{2}$. We will perform computations with $\beta < \frac{1}{2}$ to explore the behavior of the QPU and assess whether it can produce useful samples for post-processing. In this scenario, accurate solutions from the QPU are not expected. Thus, to improve solution accuracy through post-processing, and as argued in Section~\ref{Analysis_solution}, we set $\beta_{post} = \frac{1}{2}$. Computational results for Algorithm~\ref{alg} are presented in Section~\ref{computational_results}.

	\section{Dealing with large instances}\label{Graph_partitioning}
	
	As mentioned in Section~\ref{intro_QA}, some graphs are too large to be directly embedded on the QPU. In such cases, we can employ the method of CH-partitioning and divide a graph into multiple smaller subgraphs, which can then be embedded on the QPU. This method considers a related problem, namely the maximum clique problem, but, as we will observe, the conversion between the maximum clique and the maximum stable set is trivial. Therefore, we can use the given method within our framework for finding the maximum stable set. In Section~\ref{CH_method}, we describe the CH-partitioning method, and in Section~\ref{simple_partitioning_text}, we outline a modified version of the CH-partitioning we will use in our work. Furthermore, in Section~\ref{Bounding}, we discuss bounding procedures that enable us to reduce the number of partitions we have to consider and present an algorithmic framework that combines the proposed modified partitioning with the post-processing procedure. 

    \subsection{CH-partitioning}\label{CH_method}

    The concept of CH-partitioning was initially introduced in~\cite{Djidjev2016} and then further considered in~\cite{GHGG} and is defined as follows.

    \begin{definition}\label{CH-definition}
    Given an input graph $G =(V, E)$ with $n$ vertices and a positive integer $s \leq n$, a CH-partition $\mathcal{P}$ of graph $G$ is defined as:
    \begin{align*}
    \mathcal{P} = \{(C_i, H_i)\;|\; 1 \leq i \leq s\},
    \end{align*}
    where
    \begin{enumerate}
        \item the sets $\emptyset \neq C_i \subset V$ are called core sets and the set $\{C_i\;|\;1\le i\le s\}$  is called \textit{core partitioning} since the core sets must partition $V$, i.e.:  $\bigcup_i C_i = V$ and $C_i \cap C_j = \emptyset$ for all $i\neq j$,
         \item there is one \textit{complementary} set $H_i$ of vertices, also refereed as \textit{halo} set, for each core set $C_i$, defined as $H_i := N(C_i)\setminus C_i$.
    \end{enumerate}
    The cost of a CH-partitioning $\mathcal{P}$ is defined as:
    \begin{align*}
    \mbox{cost}(\mathcal{P}) = \max_{1\leq i \leq s} (|C_i|+|H_i|).
    \end{align*}   
    The \textit{CH-partitioning problem} is finding a CH-partitioning of $G$ with minimum cost for some fixed $s$.
    \end{definition}

    The following proposition, which establishes a relation between the maximum clique problem and CH-partitioning, was shown in~\cite{GHGG}.

    {\begin{proposition}
        \label{old_ch_partitioning_and_cliques}
        Given a CH-partitioning $\mathcal{P}$ of a graph $G$, the size of the maximum clique of $G$ is equal to $\max_i\{k_i\}$, where $k_i$ is the size of a maximum clique of the subgraph of $G$, induced by $C_i \cup H_i$.
    \end{proposition}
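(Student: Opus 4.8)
The plan is to establish the equality by proving the two inequalities $\max_i k_i \le \omega(G)$ and $\omega(G) \le \max_i k_i$ separately; the second direction is the substantive one.

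The direction $\max_i k_i \le \omega(G)$ is immediate: each set $C_i \cup H_i$ is a subset of $V$, so any clique of the induced subgraph $G[C_i \cup H_i]$ is also a clique of $G$ of the same cardinality. Hence $k_i \le \omega(G)$ for every $i$, and taking the maximum over $i$ gives the bound.

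For $\omega(G) \le \max_i k_i$, I would first record the identity $N(C_i) = C_i \cup H_i$. This follows directly from Definition~\ref{CH-definition}, where $H_i := N(C_i)\setminus C_i$, together with the fact that $N$ denotes the \emph{closed} neighborhood, so that $C_i \subseteq N(C_i)$. Next, let $K$ be a maximum clique of $G$. Since the core sets partition $V$ and $K \neq \emptyset$, there is an index $i$ and a vertex $v \in K \cap C_i$. The key step is that every other vertex $u \in K$ is adjacent to $v$, because $K$ is a clique; therefore $u \in N(v) \subseteq N(C_i) = C_i \cup H_i$. Combined with $v \in C_i \subseteq C_i \cup H_i$, this yields $K \subseteq C_i \cup H_i$, so $K$ is a clique of $G[C_i \cup H_i]$ and consequently $\max_i k_i \ge k_i \ge |K| = \omega(G)$. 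Combining the two inequalities gives the claimed equality.

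The main obstacle — essentially the only point requiring care — is the containment argument in the second direction: one must observe that a single clique vertex $v$ lying in a core set $C_i$ already forces the \emph{entire} clique into the closed neighborhood $N(C_i)$, which relies crucially on reading $N$ as the closed neighborhood so that both $v$ itself and all of its neighbors (covering the remaining vertices of $K$) land in $C_i \cup H_i$. A degenerate case worth a remark is $V = \emptyset$, but this is excluded by the requirement that each core set $C_i$ be non-empty.
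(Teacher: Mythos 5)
Your proof is correct and follows essentially the same route as the paper: both arguments locate one vertex $v$ of a maximum clique $K$ in some core set $C_j$ and then use the clique property together with the closed-neighborhood definition of $H_j$ to conclude $K \subseteq C_j \cup H_j$. Your version is marginally cleaner in stating the identity $N(C_i)=C_i\cup H_i$ up front and in making the easy inequality $\max_i k_i\le\omega(G)$ explicit, but the substance is identical.
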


    \begin{proof}
    Let $K$ be a maximum clique of $G = (V, E)$ and let $v$ be any vertex of $K$. Since, by definition of CH-partitioning, $\bigcup_i C_i = V$, there exists exactly one $j$ such that $v$ belongs to core set $C_j$, hence $v \in C_j \cup H_j$. To show the statement, it is sufficient to show that all other vertices of $K$ are contained in $C_j \cup H_j$.

    Let $w \in K$, $w \neq v$, and assume $w \in C_j$. Then, trivially, $w \in C_j \cup H_j$, and the statement holds.
    
    Now assume that $w \notin C_j$. Since $K$ is a clique, there is an edge between any two vertices from it, and hence there is an edge between $v$ and $w$ and $w \in N(v)$. Since, by definition, $H_j$ consists of all neighbors of vertices from $C_j$ that are not in $C_j$, and since $v \in C_j$ and $w \notin C_j$ according to assumption, it follows that $w \in N(C_j)\setminus C_j$. Hence, $w \in H_j$, and therefore $w \in C_j \cup H_j$, so the statement holds.
    \end{proof}

    Assume we use the presented CH-partitioning method in the following way. Let $G = (V, E)$ be a graph, where the vertices $V$ are ordered in some ordering, and set $C_i = \{v_i\}$ for all $i \in \{1, \ldots, n\}$. Then $H_i = N(v_i) \setminus \{v_i\}$ for all $i \in \{1, \ldots, n\}$. Altogether, this CH-partition consists of $n$ pairs $(C_i,H_i)$ and its cost is  $\Delta(G) + 1$. This is the CH-partition with the lowest cost function if we enumerate over all $s$ since each partition has to contain in each set $C_i$ at least one vertex and all of the neighbors of $C_i$. 

    An advantage of this approach is that the computation of the CH-partitioning itself is not computationally intensive. However, this method also has a disadvantage. Let us assume that $K$ is a maximum clique such that $\vert K \vert = k$. Then, $k$ partitions contain the same maximum clique. Thus, we perform computations on $k$ sets containing the same maximum clique. To overcome this obstacle, we use a modified version of the CH-partitioning. 

    \subsection{Simple CH-partitioning}\label{simple_partitioning_text}

    Our goal is to develop a procedure that reduces the number of partitions containing a maximum clique. To achieve this, we build upon the concept introduced in~\cite{Szabo2}. In that work, the authors explored upper bounds on clique numbers by disregarding subgraphs. We have now adapted this concept to our CH-partitioning framework. With this adaptation, we define the \emph{simple CH-partitioning} as follows.

    \begin{definition}\label{simple-CH-definition}
    Given an input graph $G =(V, E)$, a simple CH-partition $\mathcal{P}_S$ of graph $G$ is defined as:
    \begin{align*}
    \mathcal{P}_S = \{(C_i, H_i)\;|\; 1 \leq i \leq n\},
    \end{align*}
    where
    \begin{enumerate}
        \item the \textit{core partitioning} contains only the subsets with one element, i.e., the set of vertices $V$ is partitioned into $n$ nonempty disjoint \textit{core} sets, $C_i = \{ v_i\}$, for all $i \in \{1, \ldots, n\}$,
        \item the \textit{complementary} set $H_i$ of vertices for each core set $\{v_i\}$ is defined as $H_i := N(v_i) \setminus \{v_1, \ldots, v_i\}$.
    \end{enumerate}
    The cost of a simple CH-partitioning $\mathcal{P}_S$ is defined in a similar way  as before:
    \begin{align*}
    \mbox{cost}(\mathcal{P}_S) = \max_{1\leq i \leq n} (|C_i|+|H_i|).
    \end{align*}   
    \end{definition}

    An advantage of simple CH-partitioning is that the size of partitions will become smaller as the partition number approaches $n$, enabling us to disregard partitions too small to contain a maximum clique, thus reducing the computational expense. Furthermore, an appropriate vertex ordering could reduce the size of the largest partitions to less than $\Delta + 1$. Finally, we show that the statement of Proposition~\ref{old_ch_partitioning_and_cliques} can also be applied to this new method.

    \begin{proposition}
        \label{ch_partitioning_and_cliques}
        Given a simple CH-partitioning $\mathcal{P}_S = \{(C_i, H_i)\;|\; 1 \leq i \leq n\}$ of a graph $G$, the size of the maximum clique of $G$ is equal to $\max_i\{k_i\}$, where $k_i$ is the size of a maximum clique of the subgraph of $G$, induced by $C_i \cup H_i$ .
    \end{proposition}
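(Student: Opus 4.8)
The plan is to establish the claimed equality $\omega(G) = \max_i\{k_i\}$ by proving the two inequalities $\max_i k_i \le \omega(G)$ and $\omega(G) \le \max_i k_i$ separately, mirroring the structure of the proof of Proposition~\ref{old_ch_partitioning_and_cliques} but carefully accounting for the modified halo sets $H_i = N(v_i) \setminus \{v_1, \ldots, v_i\}$.

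The inequality $\max_i k_i \le \omega(G)$ is immediate: for each $i$, the subgraph $G[C_i \cup H_i]$ is an induced subgraph of $G$, so any clique in $G[C_i \cup H_i]$ is also a clique in $G$; hence $k_i \le \omega(G)$ for every $i$, and taking the maximum over $i$ gives the bound. This direction requires no special structure of the simple CH-partition.

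For the reverse inequality, I would fix a maximum clique $K$ of $G$ with $|K| = \omega(G)$ and exhibit a single index $j$ with $K \subseteq C_j \cup H_j$, which immediately yields $k_j \ge |K| = \omega(G)$ and therefore $\max_i k_i \ge \omega(G)$. The crucial choice---and the point where the argument departs from that of Proposition~\ref{old_ch_partitioning_and_cliques}---is to take $v_j$ to be the vertex of $K$ with the \emph{smallest} index $j$, rather than an arbitrary vertex of $K$. With this choice, $C_j = \{v_j\}$ trivially contains $v_j$, and for any other $w \in K$ with $w \ne v_j$, the clique property forces an edge between $v_j$ and $w$, so $w \in N(v_j)$. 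Since $v_j$ has the smallest index in $K$, every such $w$ has index strictly greater than $j$, so $w \notin \{v_1, \ldots, v_j\}$; combining the two facts gives $w \in N(v_j) \setminus \{v_1, \ldots, v_j\} = H_j$. Hence every vertex of $K$ lies in $C_j \cup H_j$, as required.

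The main obstacle is exactly this indexing subtlety. In the original CH-partitioning, the halo set $H_i = N(C_i) \setminus C_i$ removes only $C_i$, so beginning from \emph{any} vertex of $K$ in its core set suffices; in the simple CH-partitioning, the halo set additionally discards all earlier vertices $v_1, \ldots, v_i$, and thus an arbitrary starting vertex could fail to retain a lower-indexed clique member in its halo. Selecting the minimum-index vertex of $K$ is precisely what guarantees that none of the remaining clique vertices are truncated away, and I expect the rest of the verification to be routine once this choice is in place.
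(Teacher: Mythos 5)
Your proposal is correct and follows essentially the same route as the paper: the paper's proof likewise selects the minimum-index vertex $v_{p_1}$ of the maximum clique $K$ and shows every other clique vertex lies in $H_{p_1} = N(v_{p_1}) \setminus \{v_1, \ldots, v_{p_1}\}$, exactly as you argue. Your explicit treatment of the easy inequality $\max_i k_i \le \omega(G)$ and your remark on why the minimum-index choice is essential are both sound and consistent with the paper's (more terse) presentation.
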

    
    \begin{proof}
    Let $K$ be a maximum clique of $G = (V, E)$, and let $\vert K \vert = k$. Without loss of generality, let $K = \{v_{p_1}, v_{p_2}, \ldots, v_{p_k}\}$ such that $1 \leq p_1 < p_2 < \ldots < p_k \leq n$. For the sake of simplicity, we introduce also $L=\{v_1,v_2,\ldots, v_{p_1}\}$. Obviously, $K\cap L=\{v_{p_1}\}$.
    According to Definition~\ref{simple-CH-definition}, every vertex $v_j$ of $K$ is equal to the core set $C_j$. Selecting the core set containing the vertex $v_{p_1}$, we have that $H_{p_1} = N(v_{p_1}) \setminus L$. 
    For any $w\in K,~w\neq v_{p_1}$, we have  $w \in N(v_{p_1})\setminus L=H_{p_1}$, therefore, $K \subseteq C_{p_1} \cup H_{p_1}$.
    \end{proof}

    \begin{remark}
       \label{property_simple_CH}
        Let $\mathcal{P}_S$ be a simple CH-partitioning of a graph $G$, and let $K$ be a maximum clique in $G$. Then there exists exactly one $j \in \{1, \ldots, n\}$ such that $K \subseteq C_j \cup H_j$.
    \end{remark}

    Recall that $X \subseteq V$ is a (maximum) clique in $\overline{G}$ if and only if $X$ is a (maximum) stable set in $G$. Since $G = \overline{\overline{G}}$, based on the statement of Proposition~\ref{ch_partitioning_and_cliques}, we proceed as follows. Suppose we want to find a maximum stable set in graph $G$. We first define a simple CH-partition for graph $\overline{G}$, in which we sort the vertices according to predefined ordering. Then we use Proposition~\ref{ch_partitioning_and_cliques}, and obtain:
    $$\max_i\{\omega(\overline{G}[C_i \cup H_i])\} = \omega(\overline{G}) = \alpha(G).$$

    By definition of the graph complement, we know that $X_i$ is a maximum clique in $\overline{G}[C_i \cup H_i]$ if and only if $X_i$ is a maximum stable set in 
    $\overline{\overline{G}[C_i \cup H_i]}$.
    Combining all this, we get:
    $$\max_i\{\alpha(\overline{\overline{G}[C_i \cup H_i]})\} = \max_i\{\omega(\overline{G}[C_i \cup H_i])\} = \omega(\overline{G}) = \alpha(G).$$ 
    It is easy to see that $\overline{\overline{G}[X]}$ is isomorphic to $G[X]$, and therefore:
    $$
    \max_i\{\alpha(G[C_i \cup H_i])\} = \alpha (G).
    $$
    This means we can compute the maximum stable set in $G$ by computing maximum stable sets in subgraphs $G[C_i \cup H_i]$ and taking the largest one.

    Finally, we note that using simple partitioning greatly reduces the cost of partitioning. We consider some instances from the literature that cannot be directly embedded on QPU and compare the costs of regular CH-partitioning for $s = n$ with the costs for the proposed modified version, where we sort the vertices based on their degrees in increasing order. The results are presented in Table~\ref{table:simple_regular_part_comparison}.

    \begin{table}[hbt!]
    \caption{Comparison of costs of the simple and regular CH-partitioning for $s = n$}
    \begin{center}
    \begin{adjustbox}{max width=\textwidth}
    \begin{tabular}{ l r r r | r r | r r}
    \hline
    Instance & $n$ & $m$ & $d$ & $\mbox{cost}(\mathcal{P})$  &  $\mbox{cost}(\mathcal{P}_S)$  & Difference & Reduction of cost\\
    \hline
    brock200\_1     & 200 & 5066 & 0.25 & 166 & 136 & 30 & 18\% \\

    brock200\_2     & 200 & 10024 & 0.50 &  115 & 87 & 28 & 24\% \\

    brock200\_3     & 200 & 7852  & 0.39 & 135 & 109 & 26 & 19\% \\
 
    brock200\_4     & 200 & 6811  & 0.34 & 148 &120 & 28 & 19\% \\

    keller4         & 171 & 5100  & 0.35 & 125 & 103 & 22 & 18\% \\

    p\_hat500\_1    & 500 & 93181 & 0.75 & 205 & 95 & 110 & 54\% \\

    san200\_0\_7\_1 & 200 & 5970  & 0.30 & 156 & 131 & 25 & 16\% \\

    san200\_0\_7\_2 & 200 & 5970  & 0.30 & 165 & 123 & 42 & 25\% \\

    sanr200\_0\_7   & 200 & 6032  & 0.30 & 162 & 127 & 35 & 22\% \\

    c-fat200-1 & 200 & 18366 & 0.92 & 18 & 17 & 1 & 6\%\\
    c-fat200-2 & 200 & 16665 & 0.84 & 35 & 33 & 2 & 6\%\\
    c-fat200-5 & 200 & 11427 & 0.57 & 87 & 84 & 3 & 3\%\\
    c-fat500-1 & 500 & 120291 & 0.96 & 21 & 20 & 1 & 5\%\\
    c-fat500-2 & 500 & 115611 & 0.93 & 39 & 38 & 1 & 3\%\\
    c-fat500-5 & 500 & 101559 & 0.81 & 96 & 93 & 3 & 3\%\\
    \hline
    \end{tabular}
    \end{adjustbox}
    \end{center}
    \label{table:simple_regular_part_comparison}
    \end{table}
 
    \subsection{Bounding the partitions}\label{Bounding}

    The idea of pruning the subproblems in the context of CH-partitioning was previously explored in~\cite{Pelofske:2019}. For this purpose, the authors bounded partitions with several approaches, for instance, by coloring the partitions or computing the Lovász theta function. In this work, we use properties of the simple CH-partitioning and present two simple approaches.

    As the first step, we use the previously mentioned advantage of the simple CH-partitioning. Recall that the simple CH-partitioning yields altogether $n$ pairs $(C_i,H_i)$. Nevertheless, the size of partitions $(C_i,H_i)$ becomes smaller as $i$ approaches $n$. Now, assume that we have already computed or just estimated the largest stability number for some partition(s). Then, if the upper bound of some other partition is at most the best-computed value, the respective partition cannot contain a larger stable set. 
    
    Furthermore, we again use the annihilation number, as defined in~Definition \ref{definition_an}. Recall that the annihilation number $a(G)$ of a graph $G$ is the largest integer $k$ such that there exist $k$ different vertices in $G$ with the degree sum at most $m$, and represents an upper bound on the stability number of $G$, see~Theorem~\ref{th:upper_bound_pepper}. Thus, the annihilation number can be used as a reduction strategy: if the annihilation number of a partition is less than or equal to the size of the largest stable set found in other partitions, that partition can be disregarded. We also apply post-processing to every solution, removing excess edges and potentially further improving the solution.

    Following the given argumentation, we outline the steps of the simple partitioning procedure combined with the post-processing in Algorithm~\ref{alg_part}. 
    
    \begin{algorithm}
    \caption{Simple CH-partitioning with post-processing 
    $\hat{\alpha}_{post}^{part}(G, \beta, \beta_{post}, S, S_{post})$}\label{alg_part}
    \KwData{Graph $G$, $\beta$, $\beta_{post}$, $S$, $S_{post}$}
    $best\gets 0$\\
    Define vertex ordering $v_1, v_2, \ldots, v_n$\\
    \For{$i \gets 1$ to $n$}{
    $C_i \gets \{v_i\}$\\
    $H_i \gets N_{\overline{G}}(v_i)\setminus \cup_{k=1}^{i}\{v_k\}$\\
    \If{$a(G[C_i \cup H_i] )> best$}{
    $[X] \gets S(G[C_i \cup H_i],\beta)$\\
    $s \gets \hat{\alpha}_{post}([X], \beta_{post}, S_{post})$\\
    \If{$s > best$}   
    {
      $best \gets s$\\
    }
    }
    }
    \textbf{return} $best$
    \end{algorithm}

    In our computational implementation of Algorithm~\ref{alg_part}, we use solvers $S$, $S_{post}$, and penalty terms $\beta$, $\beta_{post}$ as described in Section~\ref{subsection_post_processing}, and we sort the vertices based on their degrees in increasing order.

    However, it is important to note that alternative vertex orderings could also be employed. Specifically, while introducing the concept of estimating clique size by discarding subgraphs in~\cite{Szabo2}, the authors explored different vertex orderings and found that these can lead to significantly different upper bounds. Therefore, investigating the impact of vertex ordering within the context of simple CH-partitioning combined with post-processing remains a topic for future research.

	\section{Computational results}\label{computational_results}
		
	We now computationally explore ideas presented in Sections~\ref{Section_post_processing} and~\ref{Graph_partitioning} and report computational results. We start by giving information about considered instances in Section~\ref{benchmarks}. In Section~\ref{Different_betas}, we study the effects of imposing different values of the penalization parameter $\beta$ in combination with the proposed post-processing procedure outlined in Algorithm~\ref{alg}. Finally, in Section~\ref{Larger_instances_partitioning_results}, we employ the simple CH-partitioning method combined with the post-processing procedure as presented in Algorithm~\ref{alg_part}.}  
    
    All computations presented in this work were conducted on an Intel(R) Core(TM) i7-9750H with 16GB of RAM. Additionally, we used two cloud solvers from the D-Wave Leap platform: the QPU solver \verb|Advantage_system4.1| and the hybrid solver \verb|hybrid_binary_quadratic_model_version2|. We also utilized a classical, locally run simulated annealing solver, which is part of the D-Wave's Python library we employed for interacting with the QPU. Finally, we note that the computations presented in this paper were conducted at the end of 2023 and the beginning of 2024. For these computations, the only hyperparameter we adjusted in the D-Wave solvers was the number of runs; all other hyperparameters were left at their default settings.

    \subsection{Benchmark set}\label{benchmarks}

    We consider several classes of instances from the literature: DIMACS instances, Paley graphs, and several evil instances.
    \begin{itemize}
    \item DIMACS instances: We perform experiments on instances from the Second DIMACS Implementation Challenge~\cite{Johnson1996Cliques}, which took place in 1992 and 1993 and which dealt with NP-hard problems maximum clique, graph coloring, and satisfiability. We consider benchmarks for the maximum clique problem. Additionally, we perform experiments on some of the \verb|dsjc| graphs. These graphs are contained in the part of the challenge dealing with graph coloring but are also interesting from the perspective of the maximum clique problem. Since we study the stable set problem, we always consider complements of the mentioned instances.
    \item Paley graphs: The maximum clique problem in Paley graphs has been considered by several authors; see, for instance,~\cite {Cohen} and~\cite{HansonPetridis}. We perform experiments on some small Paley graphs that can be embedded on QPU. Since Paley graphs are self-complementary, their clique numbers equal their stability numbers. Therefore, we do not complement these instances.
    \item Evil instances: Extremely hard and versatile (evil) instances for the maximum clique problem were recently introduced in~\cite{Szabo}. As noted in~\cite{marino}, these instances stand out for their ability to provide a wide range of difficulties, making them a valuable resource in algorithmic research and development. The benchmark is composed of altogether $40$ graphs. In this work, we consider the complements of graphs with at most $125$ vertices.
    \end{itemize}

    \subsection{Results for the post-processing procedure}\label{Different_betas}

    In this section, we assess how the post-processing procedure presented in Section~\ref{subsection_post_processing} improves the initial results from D-Wave's solvers and explore the relationship between result quality and different values of the parameter $\beta$.

    For this purpose, we perform two sets of experiments. In the first set of experiments, we consider $18$ instances with up to $125$ vertices that can be embedded on QPU. In the second set of experiments, we deal with $7$ large instances with up to $1500$ vertices that cannot be embedded on QPU and use the hybrid solver. In this case, we obtain only one solution from the solver. Hence, to fully analyze the potential of the post-processing procedure, we solve these instances with D-Wave's simulated annealing solver and post-process all returned samples.

    We conduct computations as described in Section~\ref{subsection_post_processing} and outlined in Algorithm~\ref{alg}. For the results presented in Section~\ref{results_small_instances}, we use QPU as solver $S$, while for the results presented in Section~\ref{result_large_instances}, we use hybrid solver as well as simulated annealing as initial solvers $S$. Additionally, we note that for the QPU and simulated annealing computations, we run $1000$ iterations. Simulated annealing is employed also as $S_{post}$ with $100$ iterations. We vary the parameter $\beta$, including lowering it to $\frac{1}{10}$. Note that this contradicts the assumption in Lemma~\ref{minimal_beta} requiring $\beta \geq \frac{1}{2}$, but we make this adjustment for exploratory reasons, aiming to understand the behavior of the QPU and evaluate its capability to generate valuable samples for post-processing. 

    \subsubsection{Results for smaller instances}\label{results_small_instances}
    
    The results for the first set of experiments are shown in Table~\ref{table_1}. Left of the vertical line, we provide general information about the considered instances: name of the graph, number of vertices $n$, number of edges $m$, edge density $d$, and the stability number of the graph $\alpha(G)$. Right of the vertical line, we provide the raw results and results with post-processing. The first two lines provide the raw results returned by QPU solver. The $\hat{\alpha}$ represents the highest value of the QUBO function, and the ``\# vertices, edges`` represents the number of vertices and the number of edges in the solution, which yielded this highest value. The last two lines provide the results we got using post-processing of raw results. The $\hat{\alpha}_{post}$ again represents the value of the QUBO function, which is computed on the solution we got back from post-processing. Since post-processing always returns a feasible solution, this number always represents the cardinality of the largest stable set found. The ``\# recalculations`` denotes the number of samples for which we have recomputed the results. If the obtained solution is a stable set and the solution value corresponds to $\alpha(G)$, we write the value in bold.
    
    \begin{table}[hbt!]
    \caption{Numerical results for the post-processing procedure for QUBO formulation with different values of penalty term $\beta$, where the QPU solver is used as the initial solver $S$ in Algorithm~\ref{alg} }
    \scriptsize
    \begin{center}
    \begin{adjustbox}{max width=\textwidth}
    \begin{tabular}{ l l r | l r r r r r r r r }
    \hline
    Instance & Data & Value & Result & $\beta = 1/10$ & $\beta = 1/8$ & $\beta = 1/6$ & $\beta = 1/4$ & $\beta = 1/2$ & $\beta = 1$ & $\beta = 10$ & $\beta = 100$ \\
    \hline
    C125.9 & $n$ & 125 & $\hat{\alpha}$ & 46.8 & 43.2 & 40.6 & 37 & 31 & 29 & 24 & 21 \\
    & $m$ & 787 & \# vertices, edges & 62, 76 & 57, 55 & 49, 25 & 41, 8 & 35, 4 & 29, 0 & 24, 0 & 21, 0\\ 
    & $d$ & 0.10 & $\hat{\alpha}_{post}$ &\textbf{34} & \textbf{34} & \textbf{34} & \textbf{34} & 32 & 29 & 24 & 22 \\
    & $\alpha(G)$ & 34 & \# recalculations & 1000 & 1000 & 973 & 155 & 1 & 0 & 0 & 1 \\
    \hline
    dsjc125.5 & $n$ & 125 & $\hat{\alpha}$ & 6.8 & 9 & 6.3 & 4.5 & 5 & 4 & 6 & 4 \\
    & $m$ & 3859 & \# vertices, edges & 14, 36 & 16, 28 & 11, 14 & 10, 11 & 5, 0 & 6, 1 & 6, 0 & 4, 0\\ 
    & $d$ & 0.50 & $\hat{\alpha}_{post}$ & 7 & 9 & 7 & 7 & 7 & 7 & 6 & 6\\
    & $\alpha(G)$ & 10 & \# recalculations & 1000 & 692 & 495 & 696 & 280 & 5 & 0 & 37 \\
    \hline
    dsjc125.9 & $n$ &  125 & $\hat{\alpha}$ & 45.2 & 41.2 & 39 & 35 & 32 & 26 & 23 & 21 \\
    & $m$ & 789 & \# vertices, edges & 62, 84 & 55, 55 & 46, 21 & 40, 10 & 34, 2 & 28, 1 & 23, 0 & 21, 0 \\ 
    & $d$ & 0.10 & $\hat{\alpha}_{post}$ & \textbf{34} & \textbf{34} & \textbf{34} & 33 & 32 & 27 & 23 & 22 \\
    & $\alpha(G)$ & 34 & \# recalculations & 1000 & 993 & 552 & 80 & 0 & 0 & 0 & 1 \\
    \hline
    evil\_chv12x10 & $n$ & 120 & $\hat{\alpha}$ & 45.8 & 40.8 & 35.3 & 30 & 20 & \textbf{20} & \textbf{20} & \textbf{20} \\
    & $m$ & 545 & \# vertices, edges & 69, 116 & 62, 85 & 51, 47 & 41, 22 & 28, 8 & 20, 0 & 20, 0 & 20, 0 \\  
    & $d$ & 0.08 & $\hat{\alpha}_{post}$ & \textbf{20} & \textbf{20} & \textbf{20} & \textbf{20} & \textbf{20} & \textbf{20} & \textbf{20} & \textbf{20}\\
    & $\alpha(G)$ & 20 & \# recalculations & 1000 & 999 & 1000 & 984 & 105 & 1 & 0 & 0 \\
    \hline 
    evil\_myc5x24 & $n$ & 120 & $\hat{\alpha}$ & 75.8 & 70 & 62.3 & 55 & 48 & \textbf{48} & 44 & 43 \\
    & $m$ & 236 & \# vertices, edges & 100, 121 & 97, 108 & 80, 53 & 67, 24 & 51, 3 & 48, 0 & 44, 0 & 43, 0 \\ 
    & $d$ & 0.03 & $\hat{\alpha}_{post}$ & \textbf{48} & \textbf{48} & 48 & \textbf{48} & \textbf{48} & \textbf{48} & 44 & 43 \\
    & $\alpha(G)$ & 48 & \# recalculations & 748 & 260 & 1000 & 698 & 1 & 0 & 0 & 0 \\
    \hline
    evil\_myc11x11 & $n$ & 121 & $\hat{\alpha}$ & 50 & 43.8 & 36.7 & 33 & 22 & \textbf{22} & \textbf{22} & \textbf{22} \\
    & $m$ & 508 & \# vertices, edges & 75, 130 & 6, 89 & 57, 61 & 44, 22 & 32, 10 & 22, 0 & 22, 0 & 22, 0 \\  
    & $d$ & 0.07 & $\hat{\alpha}_{post}$ & \textbf{22} & \textbf{22} & \textbf{22} & \textbf{22} & \textbf{22} & \textbf{22} & \textbf{22} & \textbf{22} \\
    & $\alpha(G)$ & 22 & \# recalculations & 1000 & 1000 & 1000 & 958 & 109 & 2 & 0 & 0 \\
    \hline
    evil\_s3m25x5 & $n$ & 125 & $\hat{\alpha}$ & 38.6 & 35.8 & 31.3 & 25 & 20 & 19 & 19 & 17 \\
    & $m$ & 873 & \# vertices, edges & 53, 72 & 49, 53 & 45, 41 & 36, 22 & 23, 3 & 19, 0 & 19, 0 & 17, 0 \\  
    & $d$ & 0.11 & $\hat{\alpha}_{post}$ & \textbf{20} & \textbf{20} & \textbf{20} & \textbf{20} & \textbf{20} & \textbf{20} & 19 & 18 \\
    & $\alpha(G)$ & 20 & \# recalculations & 1000 & 999 & 1000 & 997 & 139 & 2 & 0 & 1 \\
    \hline
    hamming6\_2 & $n$ & 64 & $\hat{\alpha}$ & \textbf{32} & \textbf{32} & \textbf{32} & \textbf{32} & \textbf{32} & \textbf{32} & \textbf{32} & \textbf{32} \\
    & $m$ & 192 & \# vertices, edges & 32, 0 & 32, 0 & 32, 0 & 32, 0 & 32, 0 & 32, 0 & 32, 0 & 32, 0 \\  
    & $d$ & 0.10 & $\hat{\alpha}_{post}$ & \textbf{32} & \textbf{32} & \textbf{32} & \textbf{32} & \textbf{32} & \textbf{32} & \textbf{32} & \textbf{32} \\
    & $\alpha(G)$ & 32 & \# recalculations & 0 & 0 & 0 & 0 & 0 & 0 & 0 & 0 \\
    \hline
    hamming6\_4 & $n$ & 64 & $\hat{\alpha}$ & 6.6 & 5.5 & 4 & 4 & \textbf{4} & 2 & \textbf{4} & \textbf{4} \\
    & $m$ & 1312 & \# vertices, edges & 12, 27 & 11, 22 & 8, 12 & 7, 6 & 4, 0 & 4, 1 & 4, 0 & 4, 0 \\ 
    & $d$ & 0.65 & $\hat{\alpha}_{post}$ & \textbf{4} & \textbf{4} & \textbf{4} & \textbf{4} & \textbf{4} & \textbf{4} & \textbf{4} & \textbf{4} \\
    & $\alpha(G)$ & 4 & \# recalculations & 994 & 989 & 923 & 78 & 88 & 74 & 11 & 43 \\
    \hline
    johnson8\_2\_4 & $n$ & 28 & $\hat{\alpha}$ & 7.2 & 6 & 5.3 & 4 & \textbf{4} & \textbf{4} & \textbf{4} & \textbf{4} \\
    & $m$ & 168 & \# vertices, edges & 12, 24 & 10, 16 & 8, 8 & 7, 6 & 4, 0 & 4, 0 & 4, 0 & 4, 0 \\ 
    & $d$ & 0.44 & $\hat{\alpha}_{post}$ & \textbf{4} & \textbf{4} & \textbf{4} & \textbf{4} & \textbf{4} & \textbf{4} & \textbf{4} & \textbf{4} \\
    & $\alpha(G)$ & 4 & \# recalculations & 955 & 998 & 560 & 51 & 0 & 0 & 0 & 0 \\
    \hline
    johnson8\_4\_4 & $n$ & 70 & $\hat{\alpha}$ & 16.8 & 14 & \textbf{14} & \textbf{14} & \textbf{14} & \textbf{14} & 12 & 11 \\
    & $m$ & 560 & \# vertices, edges & 22, 56 & 19, 20 & 14, 0 & 14, 0 & 14, 0 & 14, 0 & 12, 0 & 12, 0 \\  
    & $d$ & 0.23 & $\hat{\alpha}_{post}$ & \textbf{14} & \textbf{14} & \textbf{14} & \textbf{14} & \textbf{14} & \textbf{14} & 12 & 11 \\
    & $\alpha(G)$ & 14 & \# recalculations & 379 & 7 & 0 & 0 & 0 & 0 & 0 & 0 \\
    \hline
    johnson16\_2\_4 & $n$ & 120 & $\hat{\alpha}$ & 14.2 & 12 & 10.33 & 8 & 7 & 7 & 7 & \textbf{8} \\
    & $m$ & 1680 & \# vertices, edges & 25, 54 & 20, 32 & 17, 20 & 14, 12 & 9, 2 & 7, 0 & 7, 0 & 8, 0 \\ 
    & $d$ & 0.24 & $\hat{\alpha}_{post}$ & \textbf{8} & \textbf{8} & \textbf{8} & \textbf{8} & \textbf{8} & \textbf{8} & 7 & \textbf{8} \\
    & $\alpha(G)$ & 8 & \# recalculations & 1000 & 1000 & 989 & 558 & 15 & 2 & 2 & 0 \\
    \hline
    MANNa\_9 & $n$ & 45 & $\hat{\alpha}$ & 30.6 & 27 & 24 & 20 & 16 & \textbf{16} & \textbf{16} & \textbf{16} \\
    & $m$ & 72 & \# vertices, edges & 45, 72 & 39, 48 & 36, 36 & 27, 14 & 17, 1 & 16, 0 & 16, 0 & 16, 0 \\ 
    & $d$ & 0.07 & $\hat{\alpha}_{post}$ & \textbf{16} & \textbf{16} & 14 & 15 & \textbf{16} & \textbf{16} & \textbf{16} & \textbf{16} \\
    & $\alpha(G)$ & 16 & \# recalculations & 12 & 311 & 58 & 454 & 0 & 0 & 0 & 0 \\
    \hline
    paley61 & $n$ & 61 & $\hat{\alpha}$ & 8.4 & 7.5 & 6.7 & 6 & 5 & \textbf{5} & \textbf{5} & \textbf{5}  \\
    & $m$ & 915  & \# vertices, edges & 14, 28 & 12, 18 & 9, 7 & 8, 4 & 6, 1 & 5, 0 & 5, 0 & 5, 0 \\  
    & $d$ & 0.50 & $\hat{\alpha}_{post}$ & \textbf{5} & \textbf{5} & \textbf{5} & \textbf{5} & \textbf{5} & \textbf{5} & \textbf{5} & \textbf{5} \\
     & $\alpha(G)$ & 5 & \# recalculations & 1000 & 991 & 950 & 361 & 230 & 39 & 5 & 15\\
    \hline
    paley73 & $n$ & 73 & $\hat{\alpha}$ & 8.2 & 8 & 6.3 & 6 & 5 & 4 & \textbf{5} & \textbf{5} \\
    & $m$ & 1314 & \# vertices, edges & 13, 24 & 18, 16 & 11, 14 & 9, 6 & 6, 1 & 6, 1 & 5, 0 & 5, 0 \\  
    & $d$ & 0.50 & $\hat{\alpha}_{post}$ & \textbf{5}& \textbf{5} & \textbf{5} & \textbf{5} & \textbf{5} & \textbf{5} & \textbf{5} & \textbf{5} \\
    & $\alpha(G)$ & 5 & \# recalculations & 1000 & 1000 & 1000 & 724 & 151 & 152 & 55 & 58 \\
    \hline
    paley89 & $n$ & 89 & $\hat{\alpha}$ & 7.4 & 5 & 4.3 & 4.5 & 3 & 0 & \textbf{5} & 0  \\
    & $m$ & 1958 & \# vertices, edges & 13, 28 & 14, 36 & 11, 20 & 11, 13 & 7, 4 & 0, 0 & 5, 0 & 0, 0  \\ 
    & $d$ & 0.50 & $\hat{\alpha}_{post}$ & \textbf{5} & \textbf{5} & \textbf{5} & \textbf{5} & \textbf{5} & \textbf{5} & \textbf{5} & \textbf{5} \\
    & $\alpha(G)$ & 5 & \# recalculations & 1000 & 1000 & 1000 & 1000 & 881 & 999 & 13 & 512 \\
    \hline
    paley97 & $n$ & 97 & $\hat{\alpha}$ & 8.6 & 7 & 6.3 & 6 & 2 & \textbf{6} & 4 & 0 \\
    & $m$ & 2328 & \# vertices, edges & 13, 22 & 13, 24 & 10, 11 & 9, 6 & 8, 6 & 6, 0 & 4, 0 & 0, 0 \\  
    & $d$ & 0.50 & $\hat{\alpha}_{post}$ & \textbf{6} & \textbf{6} & \textbf{6}& \textbf{6} & \textbf{6} & \textbf{6} & 5 & \textbf{6} \\
    & $\alpha(G)$ & 6 &  \# recalculations & 920 & 999 & 616 & 20 & 777 & 11 & 29 & 362 \\
    \hline
    paley101 & $n$ & 101 & $\hat{\alpha}$ & 7.4 & 7.5 & 4 & 3 & 5 & 0 & 0 & \textbf{5} \\
    & $m$ & 2525 & \# vertices, edges & 16, 40 & 11, 14 & 13, 27 & 11, 16 & 6, 1 & 0, 0 & 0, 0 & 5, 0 \\ 
    & $d$ & 0.50 & $\hat{\alpha}_{post}$ & \textbf{5} & \textbf{5} & \textbf{5} & \textbf{5} & \textbf{5} & \textbf{5} & \textbf{5} & \textbf{5} \\
    & $\alpha(G)$ & 5 & \# recalculations & 1000 & 977 & 1000 & 1000 & 33 & 964 & 454 & 0 \\
    \hline   
    
    \end{tabular}
    \end{adjustbox}
    \end{center}
    \label{table_1}
    \end{table}

    From the results presented in Table~\ref{table_1}, we can see that post-processing did manage to improve the results in $5$ cases when $\beta = \frac{1}{2}$, $8$ cases when $\beta = 1$, $2$ cases when $\beta =  10$ and $6$ cases when $\beta =  100$. When $\beta$ was lower than $\frac{1}{2}$, solutions obtained from QPU had many edges, so post-processing was essential to extract solutions from the samples.
    
    When $\beta = \frac{1}{2}$ is used with post-processing, it outperforms larger $\beta$ values in two cases and matches the performance of larger $\beta$ values in the rest. While this may appear insignificant, in situations where performance equals that of larger $\beta$ values, the optimal value has already been reached, so adjusting the parameter cannot further improve it. There was one exception where the maximum value was not attained, and in this case, $\beta = \frac{1}{2}$ did not outperform higher $\beta$ values. Despite this, we can still confirm that our intuition in $\beta = \frac{1}{2}$ being the most suitable parameter aligns with the outcomes, as it performed equally well, and sometimes even better, than other larger $\beta$ values. Furthermore, we note that parameters $\beta = 10$ and $\beta = 100$ never outperformed $\beta = 1$, and in $6$ instances, they yielded lower solutions.
    
    We observed that decreasing $\beta < \frac{1}{2}$ and utilizing post-processing with $\beta_{post} = \frac{1}{2}$ can produce results equal to or better than those obtained with $\beta=\frac{1}{2}$. Nevertheless, it might be that this enhancement is due to the graph size, as smaller graphs with lower $\beta$ values result in post-processed samples containing more vertices than higher $\beta$ values. For instance, for graphs \verb|C125.9| and \verb|dsjc125.9|, superior solutions are achieved with lower $\beta$ values. When we examine the number of vertices in the solutions being post-processed, such as \verb|C125.9| with $\beta=\frac{1}{10}$, the initial solution contains $62$ vertices, half of all vertices in the graph. Furthermore, the initial solution may not always be the largest post-processed solution; even larger solutions may exist. Considering this, it is reasonable to expect that lower $\beta$ values may perform better for these smaller graphs by allowing larger solutions to be post-processed. 

    A potential drawback of our post-processing method when using $\beta < \frac{1}{2}$ is the increased computational expense compared to $\beta = \frac{1}{2}$ and $\beta > \frac{1}{2}$. This higher cost arises from the need to process a larger quantity of samples. For example, with \verb|C125.9| and $\beta = \frac{1}{10}$, we had to recompute stability numbers on all $1000$ samples, while with $\beta = \frac{1}{2}$, recomputation was needed on only one sample. Additionally, the computational effort varies between $\beta = \frac{1}{2}$ and $\beta > \frac{1}{2}$, with $\beta > \frac{1}{2}$ generally requiring less effort, as seen in instances like \verb|dsjc125.5|. However, this trend reverses for instances such as \verb|paley89| and \verb|paley101|, where $\beta = 1$ required more recalculations than $\beta = \frac{1}{2}$.
    
    Recall that we recalculate the stability numbers of samples by considering their connected components. To explore the structure of the samples and evaluate the complexity of these recalculations, the cardinalities of the largest components considered are presented in Table~\ref{table_1b}. 

    \begin{table}[hbt!]
    \caption{Cardinalities of the largest components considered in the post-processing of results presented in Table~\ref{table_1}}
    \begin{center}
    \begin{adjustbox}{max width=\textwidth}
    \begin{tabular}{ l r | r r r r r r r r }
    \hline
    Instance &  $n$ & $\beta = 1/10$ & $\beta = 1/8$ & $\beta = 1/6$ & $\beta = 1/4$ & $\beta = 1/2$ & $\beta = 1$ & $\beta = 10$ & $\beta = 100$ \\
    \hline
    C125.9 & 125 & 67 & 60 & 47 & 18 & 3 & 1 & 1 & 2 \\
    dsjc125.5 & 125 & 26 & 26 & 19 & 19 & 21 & 14 & 1 & 15 \\
    dsjc125.9 & 125 & 66 & 59 & 45 & 24 & 1 & 1 & 1 & 1 \\
    evil\_chv12x10 & 120 & 71 & 50 & 31 & 12 & 6 & 4 & 1 & 1 \\
    evil\_myc5x24 & 120 & 108 & 95 & 50 & 14 & 4 & 1 & 1 & 1 \\
    evil\_myc11x11 & 121 & 78 & 44 & 23 & 10 & 6 & 4 & 1 & 1 \\
    evil\_s3m25x5 & 125 & 59 & 51 & 45 & 22 & 10 & 4 & 1 & 2 \\
    hamming6\_2 & 64 & 1 & 1 & 1 & 1 & 1 & 1 & 1 & 1 \\
    hamming6\_4 & 64 & 15 & 14 & 16 & 10 & 10 & 11 & 10 & 9 \\
    johnson8\_2\_4 & 28 & 14 & 12 & 10 & 9 & 1 & 1 & 1 & 1 \\
    johnson8\_4\_4 & 70 & 29 & 24 & 1 & 1 & 1 & 1 & 1 & 1 \\
    johnson16\_2\_4 & 120 & 31 & 25 & 8 & 19 & 12 & 8 & 4 & 1 \\
    MANNa\_9 & 45 & 45 & 43 & 13 & 6 & 1 & 1 & 1 & 1 \\
    paley61 & 61 & 20 & 15 & 15 & 12 & 12 & 10 & 8 & 9  \\
    paley73 & 73 & 19 & 20 & 16 & 14 & 11 & 11 & 10 & 9 \\
    paley89 & 89 & 20 & 22 & 22 & 19 & 16 & 16 & 9 & 14  \\
    paley97 & 97 & 18 & 22 & 17 & 12 & 18 & 12 & 11 & 16 \\
    paley101 & 101 & 26 & 18 & 22 & 19 & 12 & 18 & 14 & 1 \\
    \hline
    \end{tabular}
    \end{adjustbox}
    \end{center}
    \label{table_1b}
    \end{table}

    Data presented in Table~\ref{table_1b} indicate that when the calculations are performed with $\beta \in \{\frac{1}{10}, \frac{1}{8}\}$, the components we need to consider are quite large compared to the size of the graph. For example, for the instance \verb|MANN_a9|, the components encompassed nearly all vertices of the graph. The largest components contained $13$ vertices when the calculations were executed with $\beta = \frac{1}{6}$. For all other values of the parameters, we observe that the component sizes were relatively small, suggesting that the recomputation can be carried out efficiently.

    \subsubsection{Results for larger instances}\label{result_large_instances}

    We now investigate how different $\beta$ values, in combination with the post-processing procedure, perform on larger graphs. Since we could not do those computations on QPU, we use the hybrid solver and simulated annealing as initial solvers $S$ in Algorithm~\ref{alg}, while the post-processing is done with simulated annealing, as explained at the beginning of Section~\ref{Different_betas}. Note that although the hybrid solver returns only one solution, we can still post-process it. The results for the hybrid solver are shown in Table~\ref{table_2} and the results for the simulated annealing in Table~\ref{table_3}. These tables contain the same information as Table~\ref{table_1}.

    \begin{table}[hbt!]
    \caption{Numerical results for the post-processing procedure for QUBO formulations   with different values of penalty term $\beta$, where the  hybrid solver is used as the initial solver $S$ in Algorithm~\ref{alg}}
    \scriptsize
    \begin{center}
    \begin{adjustbox}{max width=\textwidth}
    \begin{tabular}{ l l r | l r r r r r r r r }
    \hline
    Instance & Data & Value & Result & $\beta = 1/10$ & $\beta = 1/8$ & $\beta = 1/6$ & $\beta = 1/4$ & $\beta = 1/2$ & $\beta = 1$ & $\beta = 10$ & $\beta = 100$ \\
    \hline
    brock800\_1 & $n$ & 800 & $\hat{\alpha}$ & 26.8 & 25 & 23.3 & 22 & 21 & 21 & 21 & 21  \\
    & $m$ & 112095 & \# vertices, edges & 36, 46 & 32, 28 & 29, 17 & 25, 6 & 22, 1 & 21 0 & 21, 0 & 21, 0   \\ 
    & $d$ & 0.35 & $\hat{\alpha}_{post}$ & 18 & 18 & 18 & 19 & 21 & 21 & 21 & 21  \\
    & $\alpha(G)$ & 23 & \# recalculations & 1 & 1 & 1 & 0 & 0 & 0 & 0 & 0  \\
    \hline
    brock800\_2 & $n$ & 800 & $\hat{\alpha}$ & 26.8 & 25.25 & 23.33 & 22 & 21 & 21 & 21 & 21   \\ 
    & $m$ & 111434 & \# vertices, edges & 35, 41 & 33, 31 & 26, 8 & 23, 2 & 21, 0 & 21, 0 & 21, 0 & 21, 0 \\ 
    & $d$ & 0.35 & $\hat{\alpha}_{post}$ & 20 & 20 & 20 & 21 & 21 & 21 & 21 & 21  \\
    & $\alpha(G)$ & 24 & \# recalculations & 1 & 1 & 1 & 0 & 0 & 0 & 0 & 0  \\
    \hline
    brock800\_3 & $n$ & 800 & $\hat{\alpha}$ & 27 & 25.75 & 24.33 & 21.5 & 22 & 22  & 21 & 21 \\
    & $m$ & 112267 & \# vertices, edges & 33, 30 & 31, 21 & 28, 11 & 24, 5 & 22, 0 & 22, 0 & 21, 0 & 21, 0 \\ 
    & $d$ & 0.35 & $\hat{\alpha}_{post}$ & 22 & 22 & 22 & 20 & 22 & 22 & 21 & 21  \\
    & $\alpha(G)$ & 25 & \# recalculations & 1 & 1 & 1 & 1 & 0 & 0 & 0 & 0  \\
    \hline
    brock800\_4 & $n$ & 800 & $\hat{\alpha}$ & 26.6 & 24.75 & 23 & 22.5 & 21 & 20  & 21 & 21 \\
    & $m$ & 111957 & \# vertices, edges & 36, 47 & 32, 29 & 27, 12 & 25, 5 & 22, 1 & 20, 0  & 21, 0 & 21, 0 \\  
    & $d$ & 0.35 & $\hat{\alpha}_{post}$ & 18 & 18 & 19 & 20 & 21 & 20 & 21 & 21  \\
    & $\alpha(G)$ & 26 & \# recalculations & 1 & 1 & 1 & 0 & 0 & 0 & 0 & 0  \\
    \hline
    p\_hat1500\_1 & $n$ & 1500 & $\hat{\alpha}$ & 14.8 & 13.75 & 13 & 12.5 & \textbf{12} & 11 & 11 & 11   \\
    & $m$ & 839327 & \# vertices, edges & 20, 26 & 18, 17 & 15, 6 & 14, 3 & 12, 0 & 11, 0 & 11, 0 & 11, 0 \\ 
    & $d$ & 0.74 & $\hat{\alpha}_{post}$ & 10 & 10 & 11 & 11 & \textbf{12} & 11  & 11 & 11 \\
    & $\alpha(G)$ & 12 & \# recalculations & 1 & 1 & 1 & 0 & 0 & 0 & 0 & 0  \\
    \hline
    p\_hat1500\_2 & $n$ & 1500 & $\hat{\alpha}$ & 86.2 & 80.5 & 74.67 & 68.5 & 65 & \textbf{65}  & \textbf{65} & \textbf{65} \\
    & $m$ & 555290 & \# vertices, edges & 113, 134 & 105, 98 & 91, 49 & 78, 19 & 68, 3 & 65, 0 & 65, 0 & 65, 0 \\ 
    & $d$ & 0.49 & $\hat{\alpha}_{post}$ & 63 & 62 & 63 & 63 & \textbf{65} & \textbf{65} & \textbf{65} & \textbf{65}   \\
    & $\alpha(G)$ & 65 & \# recalculations & 1 & 1 & 1 & 1 & 0 & 0 & 0 & 0 \\
    \hline
    p\_hat1500\_3 & $n$ & 1500 & $\hat{\alpha}$ & 122.4 & 114.75 & 107 & 99.5 & 94 & \textbf{94} & \textbf{94} & \textbf{94}   \\
    & $m$ & 277006 & \# vertices, edges & 162, 198 & 146, 125 & 128, 63 & 112, 25 & 97, 3 & 94, 0 & 94, 0 & 94, 0  \\
    & $d$ & 0.25 & $\hat{\alpha}_{post}$ & 89 & 89 & 88 & 92 & \textbf{94} & \textbf{94}  & \textbf{94} & \textbf{94} \\
    & $\alpha(G)$ & 94 & \# recalculations & 1 & 1 & 1 & 1 & 0 & 0 & 0 & 0  \\
    \hline
    
    \end{tabular}
    \end{adjustbox}
    \end{center}
    \label{table_2}
    \end{table}

    \begin{table}[hbt!]
    \caption{Numerical results for the post-processing procedure for QUBO formulations with different values of penalty term $\beta$, where the simulated annealing is used as the initial solver $S$ in Algorithm~\ref{alg}}
    \scriptsize
    \begin{center}
    \begin{adjustbox}{max width=\textwidth}
    \begin{tabular}{ l l r | l r r r r r r r r }
    \hline
    Instance & Information & Value & Result & $\beta = 1/10$ & $\beta = 1/8$ & $\beta = 1/6$ & $\beta = 1/4$ & $\beta = 1/2$ & $\beta = 1$ & $\beta = 10$ & $\beta = 100$ \\
    \hline
    brock800\_1 & $n$ & 800 & $\hat{\alpha}$ & 26.8 & 25 & 23 & 21.5 & 21 & 20  & 17 & 15  \\
    & $m$ & 112095 & \# vertices, edges & 35, 41 & 31, 24 & 25, 6 & 25, 7 & 21, 0 & 20, 0 & 17, 0 & 15, 0 \\  
    & $d$ & 0.35 & $\hat{\alpha}_{post}$ & 21 & 21 & 21 & 21 & 21 & 20  & 17 & 15  \\
    & $\alpha(G)$ & 23 & \# recalculations & 919 & 318 & 69 & 2 & 0 & 0 & 0 & 0 \\
    \hline
    brock800\_2 & $n$ & 800 & $\hat{\alpha}$ & 26.8 & 25.25 & 23.33 & 22 & 21 & 20 & 17 & 15  \\
    & $m$ & 111434 & \# vertices, edges & 33, 31 & 33, 31 & 27, 11 & 24, 4 & 22, 1 & 20, 0 & 17, 0 & 15, 0 \\     & $d$ & 0.35 & $\hat{\alpha}_{post}$ & 20 & 21 & 21 & 21 & 21 & 20 & 17 & 15  \\
    & $\alpha(G)$ & 24 & \# recalculations & 995 & 349 & 68 & 1 & 0 & 0 & 0 & 0  \\
    \hline
    brock800\_3 & $n$ & 800 & $\hat{\alpha}$ & 27 & 25.75 & 24.33 & 22.5 & 22 & 22 & 17 & 15  \\
    & $m$ & 112267 & \# vertices, edges & 32, 25 & 32, 25 & 28, 11 & 25, 5 & 23, 1 & 22, 0 & 17, 0 & 15, 0 \\ 
    & $d$ & 0.35 & $\hat{\alpha}_{post}$ & 22 & 22 & 22 & 22 & 22 & 22 & 17 & 15  \\
    & $\alpha(G)$ & 25 & \# recalculations & 465 & 29 & 4 & 1 & 0 & 0 & 0 & 0 \\
    \hline
    brock800\_4 & $n$ & 800 & $\hat{\alpha}$ & 26.4 & 24.75 & 23.3 & 22 & 21 & 20  & 17 & 15 \\
    & $m$ & 111957 & \# vertices, edges & 34, 38 & 32, 29 & 25, 5 & 23, 2 & 21, 0 & 20, 21 & 17, 0 & 15, 0 \\ 
    & $d$ & 0.35 & $\hat{\alpha}_{post}$ & 21 & 21 & 21 & 21 & 21 & 20 & 17 & 15   \\
    & $\alpha(G)$ & 26 & \# recalculations & 870 & 362 & 58 & 0& 0 & 0 & 0 & 0  \\
    \hline
    p\_hat1500\_1 & $n$ & 1500 & $\hat{\alpha}$ & 14.8 & 13.75 & 13 & 12.5 & \textbf{12} & 11 & 9 & 9  \\
    & $m$ & 839327 & \# vertices, edges & 21, 31 & 19, 21 & 16, 9 & 14, 3 & 12, 0 & 11, 0 & 9, 0 & 9, 0  \\ 
    & $d$ & 0.74 & $\hat{\alpha}_{post}$ & 11 & \textbf{12} & 11 & \textbf{12} & \textbf{12} & 11 & 9 & 9  \\
    & $\alpha(G)$ & 12 & \# recalculations & 954 & 480 & 98 & 1 & 0 & 0 & 0 & 0   \\
    \hline
    p\_hat1500\_2 & $n$ & 1500 & $\hat{\alpha}$ & 86.2 & 80.5 & 74.67 & 68.5 & 65 & \textbf{65} & 56 & 43   \\
    & $m$ & 555290 & \# vertices, edges & 116, 149 & 104, 94 & 90, 46 & 76, 15 & 67, 2 & 65, 0 & 56, 0 & 43, 0 \\  
    & $d$ & 0.49 & $\hat{\alpha}_{post}$ & 63 & 63 & 63 & 64 & \textbf{65} & \textbf{65} & 56 & 43   \\
    & $\alpha(G)$ & 65 & \# recalculations & 1000 & 1000 & 1000 & 152 & 0 & 0 & 0 & 0   \\
    \hline
    p\_hat1500\_3 & $n$ & 1500 & $\hat{\alpha}$ & 122.4 & 114.75 & 107 & 99.5 & 94 & \textbf{94} & 76 & 59  \\
    & $m$ & 277006 & \# vertices, edges & 158, 178 & 144, 117 & 134, 81 & 110, 21 & 99, 5 & 94, 0  & 76, 0 & 59. 0 \\ 
    & $d$ & 0.25 & $\hat{\alpha}_{post}$ & 91 & 91 & 91 & 93 & \textbf{94} & \textbf{94}  & 76 & 59 \\
    & $\alpha(G)$ & 94 & \# recalculations & 1000 & 1000 & 1000 & 369 & 0 & 0 & 0 & 0   \\
    \hline
    \end{tabular}
    \end{adjustbox}
    \end{center}
    \label{table_3}
    \end{table}

    From the results presented in Table~\ref{table_2}, we observe that using lower $\beta$ values with the hybrid solver generally led to worse results compared to $\beta=\frac{1}{2}$, with an exception being the graph \verb|brock800_2| when $\beta=\frac{1}{4}$. The results for simulated annealing shown in Table~\ref{table_3} are slightly better for lower $\beta$ values, but still, in none of the cases did lower $\beta$ values yield superior results, even though the optimal value has not been reached yet. Interestingly, in graphs \verb|p_hat1500_3| and \verb|p_hat1500_2| not a single lower $\beta$ has matched the result of $\beta=\frac{1}{2}$. This again supports the interpretation that the reason why lower values of $\beta$  performed better than larger $\beta$ values on instances shown in Table~\ref{table_1} is mainly the graph size (in   Table~\ref{table_1} the instances are smaller, so we can afford re-computations of many relatively larger samples), and not because the lower $\beta$ values are a more appropriate choice for this problem.

    When we compare results for $\beta = \frac{1}{2}$ and $\beta = 1$, we note that $\beta = \frac{1}{2}$ gave better results for $2$ instances when the hybrid solver was used, and $4$ instances when the simulated annealing was employed. Results for $\beta = 10$ and $\beta = 100$ show that the hybrid solver still performs well, while simulated annealing demonstrates much worse performance, i.e., the choice of penalty term in the QUBO formulation significantly affects the performance of simulated annealing. 

    Regarding the complexity of the post-processing procedure, when using the hybrid solver, only one solution is obtained, making post-processing a less intensive task. For computations done with simulated annealing, we analyze the sizes of the largest connected components of post-processed samples and present this information in Table~\ref{table_3b}.

    \begin{table}[hbt!]
    \caption{Cardinalities of the largest components considered in the post-processing of results presented in Table~\ref{table_3}}
    \begin{center}
    \begin{adjustbox}{max width=\textwidth}
    \begin{tabular}{ l l | r r r r r r r r }
    \hline
    Instance & $n$  & $\beta = 1/10$ & $\beta = 1/8$ & $\beta = 1/6$ & $\beta = 1/4$ & $\beta = 1/2$ & $\beta = 1$ & $\beta = 10$ & $\beta = 100$ \\
    \hline
    brock800\_1 & 800 & 38 & 33 & 18 & 3 & 1 & 1 & 1 & 1 \\
    brock800\_2 & 800 & 38 & 35 & 16 & 2 & 1 & 1 & 1 & 1 \\
    brock800\_3 & 800 & 38 & 31 & 8 & 5 & 1 & 1 & 1 & 1 \\
    brock800\_4 & 800 & 37 & 32 & 17 & 1 & 1 & 1 & 1 & 1 \\
    p\_hat1500\_1 & 500 & 22 & 20 & 14 & 3 & 1 & 1 & 1 & 1 \\ 
    p\_hat1500\_2 & 1500 & 112 & 95 & 34 & 11 & 1 & 1 & 1 & 1 \\
    p\_hat1500\_3 & 1500 & 156 & 128 & 34 & 7 & 1 & 1 & 1 & 1 \\
    \hline
    \end{tabular}
    \end{adjustbox}
    \end{center}
    \label{table_3b}
    \end{table}

    From Table~$\ref{table_3b}$, we can see that the sizes of connected components are larger when lower values of $\beta$ are used. However, for $\beta \in \{\frac{1}{6}, \frac{1}{4}\}$, the sizes of components are relatively small. This observation has also been seen in smaller instances, as shown in Table~$\ref{table_1b}$. It is interesting to note this pattern considering that the graphs now are significantly larger compared to those in Table~$\ref{table_1b}$. Despite working with graphs with over $800$ vertices, the largest components we recalculated stability numbers for had a maximum of $34$ vertices. Finally, we note that for $\beta \geq \frac{1}{2}$, there were no instances where improved solutions could be obtained, leading us to skip the recalculation of the stability numbers for all returned samples.

    \subsection{Solving larger instances with partitioning}\label{Larger_instances_partitioning_results}

    As the final set of experiments, we consider large graphs that cannot be embedded on QPU and employ the method of the simple CH-partitioning as explained in Section~\ref{Graph_partitioning} and outlined in Algorithm~\ref{alg_part}. Hence, we first partition the graph into subproblems that can be embedded on QPU, run $100$ iterations on QPU, and then post-process the samples with simulated annealing, where the number of runs is also set to $100$. Since our previous computational investigation suggests that the best choice for the penalty parameter is $\frac{1}{2}$, we set $\beta = \beta_{post} = \frac{1}{2}$. The vertices are ordered such that $d(v_1) \leq d(v_2) \leq \cdots \leq d(v_n)$, allowing us to reduce the size of partitions. To evaluate the results, we compare them with those obtained using the hybrid solver.
    
    The results are presented in Table~\ref{table_4}. Columns $1-5$ contain the basic information about the considered instances: name of the instance, number of vertices $n$, number of edges $m$, edge density $d$ and the stability number $\alpha (G)$. In columns $6-9$, the results for the hybrid solver and the simple partitioning are presented. The results for the hybrid solver contain information about the solution that was post-processed and the number of vertices and edges in the solution before it was post-processed. The simple partitioning results contain the best result from all post-processed partitions and the number of partitions we considered. We write the value in bold if the optimum was achieved by either the hybrid solver or the simple partitioning method.

    \begin{table}[hbt!]
    \caption{Results for the computations with simple graph partitioning method combined with the post-processing procedure}
    \scriptsize
    \begin{center}
    \begin{adjustbox}{max width=\textwidth}
    \begin{tabular}{ l r r r r | l r | l r }
    \hline
    Instance & $n$ & $m$ & $d$ & $\alpha(G)$ &  \multicolumn{2}{c}{Hybrid solver} &  \multicolumn{2}{c}{Simple CH-partitioning}  \\
    \hline
    keller4 & 171 & 5100 & 0.35 & 11 & $\hat{\alpha}_{post}$ & \textbf{11} & $\hat{\alpha}_{post}^{part}$ & \textbf{11} \\
    & & & & & \# vertices, edges &  13, 2 & \# partitions & 147/171 \\
    \hline 
    brock200\_1 & 200 & 5066 & 0.25 & 21 & $\hat{\alpha}_{post}$ & \textbf{21} & $\hat{\alpha}_{post}^{part}$ & 19 \\
    & & & & & \# vertices, edges &  21, 0 & \# partitions & 161/200 \\
    \hline
    brock200\_2 & 200 & 10024 & 0.50 & 12 & $\hat{\alpha}_{post}$ & \textbf{12} & $\hat{\alpha}_{post}^{part}$ & 10 \\
    & & & & & \# vertices, edges &  12, 0 & \# partitions & 170/200 \\
    \hline 
    brock200\_3 & 200 & 7852 & 0.39 & 15 & $\hat{\alpha}_{post}$ & 14 & $\hat{\alpha}_{post}^{part}$ & 13 \\
    & & & & & \# vertices, edges &  14, 0 & \# partitions & 166/200 \\
    \hline 
    brock200\_4 & 200 & 6811 & 0.34 & 17 & $\hat{\alpha}_{post}$ & 16 & $\hat{\alpha}_{post}^{part}$ & 14 \\
    & & & & & \# vertices, edges &  16, 0 & \# partitions & 170/200 \\
    \hline 
    san200\_0\_7\_1 & 200 & 5970 & 0.30 & 30 & $\hat{\alpha}_{post}$ & 16 & $\hat{\alpha}_{post}^{part}$ & 16 \\
    & & & & & \# vertices, edges &  24, 8 & \# partitions & 174/200 \\
    \hline 
    san200\_0\_7\_2 & 200 & 5970 & 0.30 & 18 & $\hat{\alpha}_{post}$ & 13 & $\hat{\alpha}_{post}^{part}$ & 13 \\
    & & & & & \# vertices, edges &  20, 7 & \# partitions & 182/200 \\
    \hline 
    sanr200\_0\_7 & 200 & 6032 & 0.30 & 18 & $\hat{\alpha}_{post}$ & \textbf{18} & $\hat{\alpha}_{post}^{part}$ & 17 \\
    & & & & & \# vertices, edges &  18, 0 & \# partitions & 166/200 \\
    \hline
    c-fat200-1 & 200 & 18366 & 0.92 & 12 & $\hat{\alpha}_{post}$ & \textbf{12} & $\hat{\alpha}_{post}^{part}$ & \textbf{12} \\
    & & & & & \# vertices, edges &  12, 0 & \# partitions & 3/200 \\
    \hline
    c-fat200-2 & 200 & 16665 & 0.84 & 24 & $\hat{\alpha}_{post}$ & \textbf{24} & $\hat{\alpha}_{post}^{part}$ & \textbf{24} \\
    & & & & & \# vertices, edges &  24, 0 & \# partitions & 3/200 \\
    \hline
    c-fat200-5 & 200 & 11427 & 0.57 & 58 & $\hat{\alpha}_{post}$ & \textbf{58} & $\hat{\alpha}_{post}^{part}$ & \textbf{58} \\
    & & & & & \# vertices, edges &  58, 0 & \# partitions & 3/200 \\
    \hline
    c-fat500-1 & 500 & 120291 & 0.96 & 14 & $\hat{\alpha}_{post}$ & \textbf{14} & $\hat{\alpha}_{post}^{part}$ & \textbf{14} \\
    & & & & & \# vertices, edges &  58, 0 & \# partitions & 3/500 \\
    \hline
    c-fat500-2 & 500 & 115611 & 0.93 & 26 & $\hat{\alpha}_{post}$ & \textbf{26} & $\hat{\alpha}_{post}^{part}$ & \textbf{26} \\
    & & & & & \# vertices, edges &  26, 0 & \# partitions & 3/500 \\
    \hline
    c-fat500-5 & 500 & 101559 & 0.81 & 64 & $\hat{\alpha}_{post}$ & \textbf{64} & $\hat{\alpha}_{post}^{part}$ & \textbf{64} \\
    & & & & & \# vertices, edges &  64, 0 & \# partitions & 3/500 \\
    \hline
    p\_hat500\_1 & 500 & 93181 & 0.75 & 9 & $\hat{\alpha}_{post}$ & \textbf{9} & $\hat{\alpha}_{post}^{part}$ & \textbf{9} \\
    & & & & & \# vertices, edges &  9, 0 & \# partitions & 470/500 \\
    \hline 
    \end{tabular}
    \end{adjustbox}
    \end{center}
    \label{table_4}
    \end{table}

    From the results presented in Table~\ref{table_4}, we note that for instances with densities up to $0.50$, the hybrid solver performed better than the simple partitioning. More precisely, the hybrid solver gave better solutions for $5$ of $8$ instances. However, the optimal value was not always attained. In contrast, both methods yielded optimal results for $7$ instances with densities higher than $0.50$. 
    
    We note that instances \verb|san200_0_7_1| and \verb|san200_0_7_2| appear to be the most challenging for both methods. Despite producing identical results, the deviation from the optimal values for these instances is substantial. Other challenging instances are \verb|brock| graphs. For $4$ considered instances, the hybrid solver found $2$ optimal solutions, while the partitioning method yielded suboptimal results.
    
    Nevertheless, the simple partitioning method performed extremely well on \verb|c-fat| instances. The proposed bounding procedure significantly reduced the number of partitions, and we obtained optimal solutions for all $6$ instances by considering only $3$ partitions for each instance.
    
    Altogether, the presented initial results demonstrate the potential of the proposed simple partitioning. While the hybrid solver gave better solutions for certain instances, the differences in results were no more than $2$ vertices. Overall, by employing a few simple techniques, we attained results similar to those obtained by the hybrid solver.

	\section{Conclusion}\label{Conclusion}
	
	This paper presents a detailed analysis of solutions for the stable set problem generated by the D-Wave's quantum annealer, providing both theoretical insights and computational findings. We formulated the stable set problem as a QUBO problem with quadratic constraints weighted by a penalty parameter $\beta$. We showed that with an exact QUBO solver, the optimal value for $\beta\ge\frac{1}{2}$ is the stability number of the graph. However, due to the heuristic nature of quantum annealing, solutions from the D-Wave's QPU are often far from optimal values and may not even represent stable sets, as indicated by our computational results for instances with up to $125$ vertices. To address these challenges, we proposed a post-processing procedure that enables us to detect samples that could lead to improved solutions and to extract solutions that are stable sets. Also, we provided theoretical guarantees about the quality of the extracted solutions.

    We have employed the post-processing procedure on graphs that can be embedded on QPU but also on larger instances, for which we used the D-Wave hybrid solver and the classical simulated annealing solver from the D-Wave’s library for interacting with QPU. A comparison of the results for large instances revealed that the solutions obtained using the hybrid solver and simulated annealing are very similar. This may partially explain how the hybrid solver is running.  
    
     Computational results showed that the post-processing procedure greatly improved solution quality. Currently, selected samples are post-processed using simulated annealing, but alternative approaches could further enhance outcomes. Due to the small and sparse nature of the post-processed samples, an exact method may yield superior results in a shorter time.

    Since not all instances can be embedded on the D-Wave's QPU, we explored methods to address this issue. We used the simple CH-partitioning method as an alternative to the D-Wave's hybrid solver for certain instances. In this approach, we break down the original problem into smaller subproblems, solve them using the QPU, and then take the largest stable set across the subproblems as the final solution. To improve the quality of results, we combined the proposed partitioning method with the post-processing procedure and obtained promising results. Future research should concentrate on creating techniques to break down partitions further, recognizing partitions that could lead to optimal solutions, and investigating additional efficient reduction strategies to decrease computational costs. In particular, it would be valuable to investigate the impact of different vertex orderings.

    Furthermore, in order to assess the impact of different $\beta$ values on the quality of solutions, we conducted thorough numerical experiments with various values of the penalty term $\beta$ ranging from $\frac{1}{10}$ to $100$. Our computational results indicated that $\beta = \frac{1}{2}$ produced the best quality solutions, which is in accordance with the given theoretical guarantees. Nevertheless, it would be valuable to conduct further computational experiments by exploring values of $\beta$ in the form $\beta = \frac{1}{2} \pm \epsilon$, where $\epsilon$ is small.

    Finally, the only hyperparameter we adjusted in the D-Wave solvers for the computations presented in this work was the number of runs, while all other hyperparameters were left at their default settings. Future research should explore varying additional parameters of the QPU, such as annealing time, spin reversal transforms, and chain strength. Optimizing these parameters could enhance the performance of the QPU before applying the post-processing procedure.
	
	\section*{Disclosure statement}
	
	The authors report there are no competing interests to declare.

    \section*{Acknowledgments} We thank the two anonymous reviewers for their insightful comments and suggestions, which have substantially improved this paper.
	
	\section*{Data availability statement}
	
	The program code associated with this paper is available as ancillary files from the arXiv page of this paper: arXiv:2405.12845. Additionally, the source code and data are available upon request from the authors.
	
	\section*{Funding} The research of the first and second authors was funded by the Slovenian Research and Innovation Agency (ARIS) through the annual work program of Rudolfovo. The second author was also partially funded by research program P2-0162. The research of the third author was funded by the Austrian Science Fund (FWF) [10.55776/DOC78]. For the purposes of open access, the authors have applied a CC BY public copyright license to all author-accepted manuscript versions resulting from this submission.

	\afterpage{\clearpage}
	
	\bibliographystyle{plain}
	\bibliography{References}
	
\end{document}